\documentclass[12pt]{amsart}

\usepackage{amsthm}
\usepackage{amssymb}
\usepackage{amsmath}
\usepackage{graphicx}

\usepackage{color, soul}
\usepackage[colorlinks=true,linkcolor=blue,citecolor=blue]{hyperref}
\textwidth=16cm \textheight=22cm \hoffset=-15mm \voffset=-10mm

\usepackage{enumerate}

\usepackage{mathrsfs}

\newtheorem{theorem}{Theorem}[section]
\newtheorem*{theorem*}{Theorem}
\newtheorem{proposition}[theorem]{Proposition}
\newtheorem{corollary}[theorem]{Corollary}
\newtheorem{lemma}[theorem]{Lemma}
\theoremstyle{definition}

\newtheorem{example}[theorem]{Example}

\theoremstyle{remark}

\newtheorem{remark}[theorem]{Remark}

\numberwithin{equation}{section}

\newcommand{\N}{\mathbb{N}}

\newcommand{\C}{\mathbb{C}}
\newcommand{\D}{\mathbb{D}}

\newcommand{\T}{\mathbb{T}}

\def\GP{\mathcal{GP}}
\def\A{\mathcal{A}}
\def\H{\mathcal{H}}
\def\M{\mathcal{M}}




\begin{document}
\baselineskip=.65cm


\title[Gleason parts for algebras of holomorphic functions on $B_{c_0}$]{Gleason parts for algebras of holomorphic functions on the ball of $\mathbf{c_0}$}

\author[R. M. Aron]{Richard M. Aron }
\author[V. Dimant]{Ver\'onica Dimant}
\author[S. Lassalle]{Silvia Lassalle}
\author[M. Maestre]{Manuel Maestre}

\thanks{Partially supported by PAI-UdeSA. The first and fourth authors were partially supported by MINECO and FEDER Project MTM2017-83262-C2-1-P. The second and third
authors were partially supported by Conicet PIP 11220130100483  and ANPCyT PICT 2015-2299. The fourth author was also supported by Project Prometeo/2017/102 of the Generalitat Valenciana. }

\subjclass[2010]{46J15, 30H50,46E50, 30H05}
\keywords{Gleason parts, spectrum, algebras of holomorphic functions, bounded analytic functions}

\address{Department of Mathematical Sciences, Kent State University, Kent, (OH 44242) USA} \email{aron@math.kent.edu}

\address{Departamento de Matem\'{a}tica y Ciencias, Universidad de San
Andr\'{e}s, Vito Dumas 284, (B1644BID) Victoria, Buenos Aires,
Argentina and CONICET} \email{vero@udesa.edu.ar}

\address{Departamento de Matem\'{a}tica y Ciencias, Universidad de San
Andr\'{e}s, Vito Dumas 284, (B1644BID) Victoria, Buenos Aires,
Argentina and CONICET} \email{slassalle@udesa.edu.ar}

\address{Departamento de An\'{a}lisis Matem\'{a}tico, Universidad de Valencia, Doctor Moliner 50, (46100) Burjasot, Valencia,
Spain} \email{manuel.maestre@uv.es}

\begin{abstract}
For a complex Banach space $X$ with open unit ball $B_X,$  consider the Banach algebras
$\mathcal H^\infty(B_X)$ of bounded scalar-valued holomorphic functions and the subalgebra
$\mathcal A_u(B_X)$ of uniformly continuous functions on $B_X.$ Denoting either algebra
by $\mathcal A,$ we study the Gleason parts of the set of scalar-valued homomorphisms $\mathcal M(\mathcal A)$
on $\mathcal A.$ Following remarks on the general situation, we focus on the case $X = c_0.$

\end{abstract}

\maketitle
\bigskip

\large

\section*{Introduction}

Let $X$ be a complex Banach space with open unit ball $B_X$ and unit sphere $S_X.$ Using standard notation, $\mathcal A_u(B_X)$ denotes the Banach
algebra of holomorphic (complex-analytic) functions $f\colon B_X \to \C$ that are uniformly continuous on $B_X.$ This algebra is clearly
a subalgebra of $\mathcal H^\infty(B_X),$ the Banach algebra of all bounded holomorphic mappings on $B_X$  both endowed with the supremum norm $\|f\|=\sup\{|f(x)|\ |\  \|x\|<1\}$. Also each
function in $\mathcal A_u(B_X)$ extends continuously to $\overline{B}_X$. Then, the  {\em maximal ideal space} (the {\em spectrum} for short) of $\mathcal A_u(B_X),$  that is the set of all nonzero $\C-$valued homomorphisms $\mathcal M(\mathcal A_u(B_X))$
on $\mathcal A_u(B_X),$ contains the point evaluations $\delta_x$ for all $x \in X, \ \|x\| \leq 1.$  Our primary interest here
will be in the structure of the set of such homomorphisms, and our specific focus will be on the Gleason parts of $\mathcal M(\mathcal A_u(B_X))$ and $\mathcal M(\mathcal H^\infty(B_X))$ when $X = c_0.$ Classically, in the case of Banach algebras of holomorphic functions on a finite dimensional space, the study of Gleason parts was motivated by the search for analytic structure in the spectrum. That remains true in our case, in which the holomorphic functions have as their domain the (infinite dimensional) ball of $X.$ However, in  infinite dimensions the situation is more complicated and more interesting. For instance, in this case, we will exhibit non-trivial examples of Gleason parts intersecting more than one fiber; this phenomenon holds in the finite dimensional case in only simple, uninteresting cases.
Unlike the situation when dim $X < \infty,$
it is well-known (see, e.g., \cite{aron-cole-gamelin}) that $\mathcal M(\mathcal A_u(B_X))$ usually contains much more than
mere evaluations at points of $\overline{B}_X$. As we will see, the study of Gleason parts of
$\mathcal M(\mathcal A_u(B_X))$ in the case of an infinite dimensional $X$ is considerably more difficult than in the easy, finite dimensional situation. Now, when the  algebra considered is $\mathcal H^\infty(\D)$ the seminal paper of Hoffman \cite{hoffman} evidences the complicated nature of the Gleason parts for its spectrum (see also \cite{gorkin, suarez, mortini}). So, it is not surprising that our results when $\D$ is replaced by
$B_X$ are incomplete. However, as we will see, much information about Gleason parts for both the $\mathcal A_u$ and $\mathcal H^\infty$ cases can be obtained when $X = c_0.$
\\

As just mentioned,  we will concentrate on the case $X=c_0$, which is the natural extension of the polydisc $\D^n$. After a review in Section 1 of necessary background and some general
results, the description of Gleason parts for  $\mathcal M(\A_u(B_{c_0}))$ will constitute Section 2.  Finally, in Section 3 we will  discuss what we have learned about Gleason parts for $\mathcal M(\mathcal H^\infty(B_{c_0})).$
\\

For general theory of holomorphic functions we refer the reader to the monograph of Dineen~\cite{dineen} and for further information on uniform algebras and Gleason parts we suggest the books of Bear~\cite{bear}, Gamelin~\cite{gamelin}, Garnett~\cite{garnett} and Stout~\cite{stout}.
\vspace{1cm}

\section{Background and general results}

In this section, we will discuss some simple results concerning Gleason parts for $\mathcal M(\mathcal A)$ where $\A$ is an algebra of holomorphic functions defined on the open unit ball of a general Banach space $X$.
Namely, $\mathcal A$ will denote either $\mathcal A_u(B_X)$ or $\mathcal H^\infty(B_X)$.
For a Banach space $X$, as  usual $X^*$ and $X^{**}$ denote the dual and the bidual spaces, respectively. We begin with very short reviews of:\\ (i) Gleason parts (\cite{bear}, \cite{gamelin}) and\\ (ii) the particular Banach algebras of holomorphic functions that we are interested in.
\\

\indent (i)  Let $\mathcal A$ be a uniform algebra  and let
$\mathcal M(\mathcal A)$ denote the compact set of non-trivial homomorphisms $\varphi\colon \mathcal A \to \C$ endowed with the $w(\mathcal A^*,\mathcal A)$ topology ($w^*$ for short).
For $\varphi, \psi \in \mathcal M(\mathcal A),$ we set the {\em pseudo-hyperbolic distance}
$$
\rho(\varphi, \psi) :=
\sup\{|\varphi(f)| \ | \  f \in \mathcal A, \|f\| \leq 1, \psi(f) = 0 \}.
$$
Recall that when $\A = \A(\D)$ or $\A=\H^\infty(\D)$, the pseudo-hyperbolic metric for $\lambda$ and $\mu$ in the unit disc $\D$ is given by
$$
\rho(\delta_\lambda, \delta_\mu) = \Big| \frac{\lambda - \mu}{1 - \overline{\lambda}\mu}\Big|.
$$
Also, the formula given above remains true if $\A=\A(\D)$ for $\lambda, \mu\in \overline{\D}$, if $|\lambda|=1$ and $\lambda \ne \mu$. Clearly, in this case, $\rho(\delta_\lambda, \delta_\mu) =1$.

The following very useful relation is well known (see, for instance, \cite[Theorem~2.8]{bear}):
\begin{equation}\label{Gleason-metric}
\|\varphi - \psi\| = \frac{2 - 2\sqrt{1 - \rho(\varphi, \psi)^2}}{\rho(\varphi,\psi)}.
\end{equation}

Noting that it is always the case that $\|\varphi - \psi\| \ (\equiv \sup_{\|f\| \leq 1}
|\varphi(f) - \psi(f)|) \leq 2,$ the main point here being that $\|\varphi - \psi\| < 2 $
if and only if $\rho(\varphi, \psi) <1.$ From this (with some work), it follows that by defining  $\varphi \sim \psi$
to mean that $\rho(\varphi, \psi) < 1$  leads to a partition of $\mathcal M(\mathcal A)$ into equivalence classes, called {\em Gleason parts}. Specifically,
for each $\varphi \in \mathcal M(\mathcal A),$ the Gleason part containing $\varphi$ is
the set
$$
\mathcal {GP}(\varphi) := \{ \psi \ | \ \rho(\varphi,\psi) < 1\}.
$$
We remark that it was perhaps K\"onig \cite{konig} who coined the phrase {\em Gleason metric} for the metric $\|\varphi - \psi\|.$ \\

\indent (ii) We first recall \cite{davie-gamelin} that any $f \in \H^\infty(B_X)$ can be extended in a canonical way to $\tilde f \in \mathcal H^\infty(B_{X^{\ast\ast}}).$ Moreover,
the extension $f \leadsto \tilde f$ is a homomorphism of Banach algebras. A standard
argument shows that the canonical extension
takes functions in $\mathcal A_u(B_X)$ to functions in $\mathcal A_u(B_{X^{\ast\ast}}).$ Consequently, each point $z_0 \in
B_{X^{\ast\ast}}$ (resp. $\overline{B}_{X^{\ast\ast}}$) gives rise to an element $\tilde{\delta}_{z_0}\in \mathcal M(\mathcal H^\infty (B_X))$ (resp.
$\mathcal M(\mathcal A_u(B_{X}))).$ Here, for a given function $f, \tilde{\delta}_{z_0}(f) =
\tilde{f}(z_0).$ Note that for $f\in \mathcal A_u(B_{X})$ and $z_0\in X^{**}$, with $\|z_0\|=1$,  we are allowed to compute $\tilde f(z_0)$ and we will use this fact without further mention. Also, in order to avoid unwieldy notation we will omit the tilde over the $\delta$, simply writing $\delta_{z_0}(f).$ We recall that either for $\A=\mathcal A_u(B_X)$ or $\A=\mathcal H^\infty(B_X)$ there is a mapping $\pi\colon\mathcal M(\mathcal A) \to \overline{B}_{X^{\ast\ast}}$ given by $\pi(\varphi) := \varphi|_{X^\ast}.$ Note that this makes sense since $X^\ast \subset \mathcal A.$ It is not difficult to see
that $\pi$ is surjective \cite{aron-cole-gamelin}. As usual, for any $z \in \overline{B}_{X^{\ast\ast}},$ the {\em fiber over $z,$}\ will be denoted by
$$
\mathcal M_z := \{ \varphi \in \mathcal M(\mathcal A) \ | \
\pi(\varphi) = z\}.
$$
As we will see, knowledge of the fiber structure is useful in the study of Gleason parts, in the context of the Banach algebras $\mathcal A_u(B_X)$ and $\mathcal H^\infty(B_X).$
The first instance of this occurs in part (b) of Proposition~\ref{basic 1} below.
\vskip .5cm

\begin{proposition}{\label{basic 1}}
Let $X$ be a Banach space and $\mathcal M = \mathcal M(\mathcal A)$ be as above.
\begin{enumerate}[\upshape (a)]
\item The set $\{\delta_z\colon z\in B_{X^{\ast\ast}}\}$ is contained in $\mathcal{GP}(\delta_0)$. In fact, $\rho(\delta_0,\delta_z) = \|z\|$ for each $z \in B_{X^{\ast\ast}}.$
\item Let $z \in S_{X^{\ast\ast}}$ and $w\in B_{X^{\ast\ast}}$. Then, for any $\varphi \in \mathcal M_z$ and $\psi \in \mathcal M_w$,  $\rho(\varphi, \psi) = 1.$ That is, $\varphi$ and $\psi$ lie in different Gleason parts.
\end{enumerate}
\end{proposition}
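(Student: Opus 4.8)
The plan is to prove directly that $\rho(\varphi,\psi)=1$ by exhibiting, for each $\varepsilon>0$, a function $f\in\mathcal{A}$ with $\|f\|\leq 1$, $\psi(f)=0$, and $|\varphi(f)|>1-\varepsilon$. The raw material will be linear functionals: since $\pi(\varphi)=z$ and $\pi(\psi)=w$, for every $\phi\in X^*$ we have $\varphi(\phi)=z(\phi)$ and $\psi(\phi)=w(\phi)$, so a single $\phi$ already encodes the two scalars we wish to separate.

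First I would exploit $\|z\|=1$: choose $\phi\in B_{X^*}$ with $|z(\phi)|$ as close to $1$ as desired, and set $\nu:=z(\phi)=\varphi(\phi)$ and $\mu:=w(\phi)=\psi(\phi)$, noting $|\mu|\leq\|w\|<1$. Because $\|\phi\|\leq 1$ and $B_X$ is open, $\phi$ maps $B_X$ into $\D$ and $\overline{B}_X$ into $\overline{\D}$. I then compose with the disc automorphism $m_\mu(\zeta)=\frac{\zeta-\mu}{1-\overline{\mu}\zeta}$ and define $f:=m_\mu\circ\phi$. Since $m_\mu$ is holomorphic on the disc $\{|\zeta|<1/|\mu|\}\supset\overline{\D}$ and carries $\D$ into $\D$, the function $f$ is holomorphic, satisfies $\|f\|\leq 1$, and (being a composition of Lipschitz maps) is uniformly continuous on $B_X$; hence $f\in\mathcal{A}$ whether $\mathcal{A}=\mathcal{A}_u(B_X)$ or $\mathcal{A}=\mathcal{H}^\infty(B_X)$.

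The crux is to evaluate $\varphi$ and $\psi$ on $f$. Here I would use that $m_\mu$ has a power series expansion converging uniformly on $\overline{\D}$ (its radius of convergence being $1/|\mu|>1$), so that $f$ is the uniform limit on $\overline{B}_X$ of the corresponding polynomials in $\phi$. Any $\chi\in\mathcal{M}(\mathcal{A})$ is a continuous, multiplicative, unital functional, and therefore commutes with uniform limits and with polynomials; this yields the substitution rule $\chi(m_\mu\circ\phi)=m_\mu(\chi(\phi))$. Applying it gives $\psi(f)=m_\mu(\mu)=0$ and $\varphi(f)=m_\mu(\nu)$. I regard this interchange --- legitimizing $\chi(m_\mu\circ\phi)=m_\mu(\chi(\phi))$ through the absolutely convergent series together with the continuity of $\chi$ --- as the one step demanding genuine care.

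It then remains to estimate $|\varphi(f)|=|m_\mu(\nu)|$. Using the standard identity $1-|m_\mu(\nu)|^2=\frac{(1-|\nu|^2)(1-|\mu|^2)}{|1-\overline{\mu}\nu|^2}$, the denominator is bounded below by $(1-\|w\|)^2>0$ while the numerator is at most $1-|\nu|^2$, which tends to $0$ as $|\nu|\to 1$. Hence $|\varphi(f)|\to 1$, so $\rho(\varphi,\psi)\geq|\varphi(f)|$ can be made arbitrarily close to $1$; since $\rho\leq 1$ always, we conclude $\rho(\varphi,\psi)=1$, and by the definition of Gleason parts $\varphi$ and $\psi$ lie in different parts.
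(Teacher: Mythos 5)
Your proposal proves part (b) only; part (a) is not addressed at all. Recall that (a) asserts $\rho(\delta_0,\delta_z)=\|z\|$ for $z\in B_{X^{**}}$, and this needs two separate estimates: the upper bound $\rho(\delta_0,\delta_z)\le\|z\|$ comes from the Schwarz lemma applied to the canonical (Aron--Berner) extension $\tilde f$ of an $f\in\mathcal A$ with $f(0)=0$ and $\|f\|\le 1$, giving $|\tilde f(z)|\le\|z\|$; the lower bound comes from testing against a sequence $(x^*_n)\subset\overline B_{X^*}$ with $|z(x^*_n)|\to\|z\|$. Nothing in your write-up supplies the Schwarz-lemma half, so as it stands the proof of the proposition is incomplete.

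Your argument for (b) is correct, and it takes a genuinely different route from the paper's. The paper picks norm-one functionals $x^*_n$ with $z(x^*_n)\to 1$ and uses the normalized powers $g_{n,m}=\big((x^*_n)^m-w(x^*_n)^m\big)/\big\|(x^*_n)^m-w(x^*_n)^m\big\|$, letting the exponent $m$ do the separating via $|\varphi(g_{n,m})|\ge(|z(x^*_n)|^m-\|w\|^m)/(1+\|w\|^m)\to 1$. You instead compose $\phi$ with the M\"obius map $m_\mu$ that kills $\psi(\phi)=\mu$ exactly and read off $|\varphi(f)|=|m_\mu(\nu)|$ from the one-variable identity; in effect you prove $\rho(\varphi,\psi)\ge\big|\tfrac{\nu-\mu}{1-\overline{\mu}\nu}\big|$ for every $\phi\in\overline B_{X^*}$, which is the sharper, disc-level bound. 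Your flagged step --- the substitution rule $\chi(m_\mu\circ\phi)=m_\mu(\chi(\phi))$ --- is justified as you say: the power series of $m_\mu$ has radius $1/|\mu|>1$, so its partial sums composed with $\phi$ converge to $f$ in the norm of $\mathcal A$, and $|\chi(\phi)|\le 1$ lies inside the disc of convergence, so continuity and multiplicativity of $\chi$ finish it. One point worth making explicit is that the bound $|\mu|\le\|w\|<1$ is uniform over all admissible $\phi$, which is what keeps the denominator $|1-\overline{\mu}\nu|^2\ge(1-\|w\|)^2$ away from zero as $|\nu|\to 1$. So (b) is fine; you need to add the argument for (a).
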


\begin{proof}
\indent (a) Fix $z \in B_{X^{\ast\ast}}, z \neq 0,$ and $f \in \mathcal A,$ such that $\| f\| \leq 1$ and $f(0) = \delta_0(f) = 0.$
By an application of the Schwarz lemma to $\tilde f \in \mathcal A(X^{\ast\ast}),$ we see that $|\delta_z(f)| = |\tilde{f}(z)|
\leq \|z\|.$ Therefore $\rho(\delta_0, \delta_z) \leq \|z\| < 1,$ or in other words $\delta_z$ is in the same
Gleason part as $\delta_0.$  In addition, if we apply the definition of $\rho$ to a sequence $(x^*_n) \subset \overline{B}_{X^{\ast}} \subset
\mathcal A$ such that $|z(x^*_n)| \to \|z\|,$ we get that $\rho(\delta_0, \delta_z) \geq \|z\|.$

\indent (b) As in part (a) and using that $\varphi\in \M_z$, we may choose a sequence $(x_n^*)$ of norm one functionals on $X$ such that $\varphi(x^*_n) = z(x^*_n) \to \|z\| = 1.$ Observe
that $| \psi(x^*_n) | = |w(x^*_n)| \leq \| w\| < 1.$ For each $n, m \in \N,$ the function $g_{n,m}\colon B_X\to\C$ defined as
$$
g_{n,m}(\cdot) = \frac{(x^*_n(\cdot))^m - w(x^*_n)^m}{\| (x^*_n)^m - w(x^*_n)^m  \|}
$$
is in $\mathcal A = \mathcal A_u(B_X)$ or $\mathcal H^\infty(B_X).$ Evidently, $\| g_{n,m} \| = 1$ and $\psi(g_{n,m}) = 0.$ In addition,
$$
|\varphi(g_{n,m})| \geq \frac{ |z(x^*_n)|^m - \|w\|^m}{1 + \|w\|^m},
$$
which approaches $1$ with $n$ and $m$. Then, $\rho(\psi, \varphi) = 1$ and  $\psi$ and $\varphi$ are in different parts.
\end{proof}

In the classical situation of $\mathcal M(\mathcal H^\infty(\D)),$ the Gleason part containing the evaluation at the origin, $\delta_0,$
consists of the set  $\{ \delta_z \ | \ z \in \D\}.$ This known fact is made evident in view of Proposition \ref{basic 1} and the fact that fibers over points in $\D$ are singletons. In the case of an infinite dimensional space $X$, it can happen that fibers (over interior points) are bigger than single evaluations and also the Gleason part of $\delta_0$ could properly contain $B_{X^{**}}$.
The following, which uses part (a) of Proposition~\ref{basic 1},
gives a glimpse at this situation.\\

\begin{proposition}{\label{gp-at-0}} Let $X$ be a Banach space. Fix $r$, $0 < r < 1$ and consider $B_{X^{\ast\ast}}(0,r) \approx\{ \delta_z \ | \ z \in X^{\ast\ast}, \|z\| < r\} \subset \mathcal M(\mathcal A).$
Then  the closure of $B_{X^{\ast\ast}}(0,r)$ in $\mathcal M(\mathcal A)$ is contained in $\mathcal {GP}(\delta_0).$
\end{proposition}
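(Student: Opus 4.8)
The plan is to show that every $\varphi$ lying in the $w^*$-closure of $B_{X^{\ast\ast}}(0,r)$ satisfies $\rho(\delta_0,\varphi)\le r<1$, which by definition of the Gleason part places $\varphi$ in $\mathcal{GP}(\delta_0)$. Since the relation $\rho<1$ is equivalent to $\|\varphi-\psi\|<2$ through \eqref{Gleason-metric}, and the Gleason metric $\|\varphi-\psi\|$ is visibly symmetric, the quantity $\rho$ is itself symmetric; hence it suffices to bound the more convenient expression
$$
\rho(\varphi,\delta_0) = \sup\{|\varphi(f)| \ | \ f\in\mathcal A,\ \|f\|\le 1,\ f(0)=0\}.
$$

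First I would fix an admissible competitor $f\in\mathcal A$ with $\|f\|\le 1$ and $f(0)=0$, and recall the Schwarz-lemma estimate already exploited in Proposition~\ref{basic 1}(a): the canonical extension satisfies $|\tilde f(z)|\le \|z\|$ for every $z\in B_{X^{\ast\ast}}$. In particular, $|\delta_z(f)| = |\tilde f(z)| \le \|z\| < r$ whenever $\|z\|<r$, so on the whole set $B_{X^{\ast\ast}}(0,r)$ the evaluation at this fixed $f$ is bounded by $r$ \emph{uniformly}. The structural observation that makes the proof work is that, for this fixed $f$, the map $\psi\mapsto\psi(f)$ is continuous on $\mathcal M(\mathcal A)$ for the $w(\mathcal A^*,\mathcal A)$ topology, since that is precisely the topology with which $\mathcal M(\mathcal A)$ is endowed.

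Next I would take a net $(\delta_{z_\alpha})$ with $\|z_\alpha\|<r$ converging $w^*$ to $\varphi$, using nets rather than sequences because the $w^*$-topology on $\mathcal M(\mathcal A)$ need not be metrizable. By the continuity just noted, $\varphi(f)=\lim_\alpha \delta_{z_\alpha}(f)$, and since each $|\delta_{z_\alpha}(f)|\le r$, the limit satisfies $|\varphi(f)|\le r$. As $f$ was an arbitrary element of $\mathcal A$ with $\|f\|\le 1$ and $f(0)=0$, and the bound $r$ is independent of $f$, taking the supremum over all such $f$ gives $\rho(\varphi,\delta_0)\le r<1$, so $\varphi\in\mathcal{GP}(\delta_0)$.

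I do not anticipate a serious obstacle: the entire content is the interplay between the uniform Schwarz bound from Proposition~\ref{basic 1}(a), which makes point evaluations small uniformly on $B_{X^{\ast\ast}}(0,r)$, and the $w^*$-continuity of evaluation at a fixed $f$, which transfers that uniform smallness to every limit point. The only steps demanding mild care are the passage to nets (to handle possible non-metrizability) and the verification that the estimate $|\varphi(f)|\le r$ is genuinely uniform in $f$, so that it is preserved when one passes to the supremum defining $\rho(\varphi,\delta_0)$.
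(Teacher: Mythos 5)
Your argument is correct and follows essentially the same route as the paper's: fix an admissible $f$ with $f(0)=0$ and $\|f\|\le 1$, use the Schwarz-lemma bound $|\delta_z(f)|\le\|z\|<r$ from Proposition~\ref{basic 1}(a) uniformly on $B_{X^{\ast\ast}}(0,r)$, and transfer it to the closure via $w^*$-continuity of evaluation at $f$ (the paper phrases this with an $\varepsilon$ from the definition of closure rather than a net, obtaining $|\varphi(f)|<r+\varepsilon$ instead of your cleaner $|\varphi(f)|\le r$). Your explicit remark that the relation $\rho<1$ is symmetric via \eqref{Gleason-metric} is a small point the paper leaves implicit, but the substance is identical.
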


\begin{proof}
Fix $\varphi \in \mathcal M(\mathcal A)$,  $\varphi $ in the  closure of $B_{X^{\ast\ast}}(0,r)$, and choose any $f \in \mathcal A, f(0)=0, \|f\| = 1.$ By definition, for fixed $\varepsilon>0$ such that $r + \varepsilon <1$ there is $z \in B_{X^{\ast\ast}}(0,r)$ such that $| \varphi(f) - \delta_z(f)| < \varepsilon.$
Then, $$|\varphi(f) - \delta_0(f)| \leq \varepsilon + |\delta_0(f) - \delta_z(f)|\le \varepsilon + \rho(\delta_0, \delta_z) < \varepsilon + r.$$ Thus, $\rho(\varphi,\delta_0) < 1,$ which concludes the proof.
\end{proof}

In many common situations, there are norm-continuous polynomials $P$ acting on the Banach space $X$ whose restriction to $B_X$ is not weakly continuous. To give
one very easy example, the $2-$homogeneous polynomial $P\colon \ell_2 \to \C, \ P(x) = \sum_n x_n^2$ is such that $1 = P(\frac{\sqrt{2}}{2}[e_1 + e_n]) \neq 1/2 = P(\frac{\sqrt{2}}{2}e_1).$
In these cases, the following corollary shows that the exact composition of $\mathcal {GP}(\delta_0)$ is somewhat more complicated.

\begin{corollary}{\label{not-wkly-conts}}
Let $X$ be a Banach space which admits  a (norm) continuous polynomial that is not weakly continuous when restricted to the unit ball. Then $B_{X^{\ast\ast}} \subsetneqq \mathcal {GP}(\delta_0).$
\end{corollary}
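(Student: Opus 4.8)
The plan is to exploit the two inclusions already in hand. Part (a) of Proposition~\ref{basic 1} gives $\{\delta_z : z\in B_{X^{\ast\ast}}\}\subseteq\mathcal{GP}(\delta_0)$, and since $z\mapsto\delta_z$ is injective (one recovers $z$ as $\pi(\delta_z)$), the whole content of the statement is the \emph{strict} inclusion. So it suffices to exhibit a single $\varphi\in\mathcal{GP}(\delta_0)$ that is \emph{not} an evaluation $\delta_z$ at any point $z\in B_{X^{\ast\ast}}$. I will build $\varphi$ as a weak$^*$ cluster point of evaluations along a weakly convergent net sitting well inside $B_X$, and the hypothesized polynomial will be the test function that certifies $\varphi$ is not a point evaluation.

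First I would reduce to a homogeneous polynomial: a continuous polynomial is weakly continuous on the bounded sets of $X$ exactly when each of its homogeneous components is (a standard fact; see, e.g., \cite{dineen}), so some homogeneous component $P$ fails to be weakly continuous on $B_X$, and since constant and linear terms are always weakly continuous we may take $P$ to be $m$-homogeneous with $m\ge 2$. Such a $P$ is bounded and (being Lipschitz on bounded sets) uniformly continuous on $B_X$, hence $P\in\mathcal A$ in both cases. By failure of weak continuity there are a point $y\in B_X$, a number $\varepsilon>0$, and a net $(y_\alpha)\subset B_X$ with $y_\alpha\to y$ weakly but $|P(y_\alpha)-P(y)|\ge\varepsilon$; passing to a subnet (the net $(P(y_\alpha))$ is bounded because $P$ is bounded on $B_X$) we may assume $P(y_\alpha)\to L'$ with $L'\ne P(y)$. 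Now set $x_\alpha=\tfrac12 y_\alpha$ and $x=\tfrac12 y$. Then $x_\alpha\to x$ weakly, $\|x_\alpha\|<\tfrac12$, and by $m$-homogeneity $P(x_\alpha)=2^{-m}P(y_\alpha)\to 2^{-m}L'=:L$, while $P(x)=2^{-m}P(y)$; since $L'\ne P(y)$ we get $L\ne P(x)$.

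Using the weak$^*$ compactness of $\mathcal M(\mathcal A)$, pass to a subnet so that $\delta_{x_\alpha}\to\varphi$ in the weak$^*$ topology for some $\varphi\in\mathcal M(\mathcal A)$. Each $\delta_{x_\alpha}$ lies in $B_{X^{\ast\ast}}(0,\tfrac12)$, so $\varphi$ lies in its weak$^*$ closure and Proposition~\ref{gp-at-0} (with $r=\tfrac12$) yields $\varphi\in\mathcal{GP}(\delta_0)$. It remains to see $\varphi$ is not a point evaluation. On one hand $\pi$ is weak$^*$-to-weak$^*$ continuous, since for each fixed $x^\ast\in X^\ast$ the map $\varphi\mapsto\varphi(x^\ast)$ is weak$^*$ continuous; as $x_\alpha\to x$ weakly this forces $\pi(\varphi)=x\in X$. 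On the other hand, evaluating at the test function, $\varphi(P)=\lim P(x_\alpha)=L$. If now $\varphi=\delta_z$ for some $z\in B_{X^{\ast\ast}}$, then $z=\pi(\delta_z)=\pi(\varphi)=x$, whence $\varphi(P)=\tilde P(x)=P(x)$, contradicting $\varphi(P)=L\ne P(x)$. Thus $\varphi\in\mathcal{GP}(\delta_0)\setminus\{\delta_z: z\in B_{X^{\ast\ast}}\}$, proving $B_{X^{\ast\ast}}\subsetneqq\mathcal{GP}(\delta_0)$.

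The only genuinely delicate point is keeping $\varphi$ inside $\mathcal{GP}(\delta_0)$: a weakly convergent net in $B_X$ may have norms tending to $1$, and then its cluster point could fall into a fiber over the sphere and, by part (b) of Proposition~\ref{basic 1}, into a different Gleason part. Homogeneity is exactly what lets us rescale the failure of weak continuity into $\tfrac12 B_X$, so that Proposition~\ref{gp-at-0} applies. The remaining ingredients---that $P\in\mathcal A$, the reduction to a homogeneous component, and the weak$^*$ continuity of $\pi$---are routine.
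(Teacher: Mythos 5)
Your proof is correct, and its skeleton is the same as the paper's: produce a net of evaluations at points of some ball $B_{X^{\ast\ast}}(0,r)$ with $r<1$, take a weak-star cluster point $\varphi$, invoke Proposition~\ref{gp-at-0} to place $\varphi$ in $\mathcal{GP}(\delta_0)$, and use the polynomial as the witness that $\varphi$ is not a point evaluation. Where you differ is in how the net is manufactured. The paper cites two results of Boyd--Ryan to upgrade the hypothesis to the existence of a homogeneous $P$ whose canonical extension $\tilde P$ fails to be weak-star continuous \emph{at the origin} on every ball $B_{X^{\ast\ast}}(0,r)$; the resulting $\varphi$ then sits in the fiber $\mathcal M_0$ and is separated from $\delta_0$ by $P$ itself ($\delta_0(P)=0\neq b=\varphi(P)$). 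You instead make the elementary reduction to a homogeneous component of degree $\ge 2$ (which, as you note, only needs that a finite sum of weakly continuous functions is weakly continuous) and then use homogeneity to rescale a point of weak discontinuity $y\in B_X$ into the half-ball, landing $\varphi$ in the fiber over $x=y/2$ with $\varphi(P)=L\neq P(x)=\delta_x(P)$. Your route is more self-contained, avoiding the Boyd--Ryan machinery entirely; the paper's route buys the slightly sharper localization that the extra homomorphism can be found in the fiber over $0$. Both conclusions are consistent with the remark following the corollary, and all the individual steps you flag as delicate (membership of $P$ in $\mathcal A_u(B_X)$, weak-star continuity of $\pi$, and the need to keep the net inside a ball of radius bounded away from $1$) are handled correctly.
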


\begin{proof} Combining~\cite[Corollary~2]{boyd-ryan} and~\cite[Proposition~3]{boyd-ryan} if $X$ admits a polynomial which is not weakly continuous when restricted to the unit ball, then there is a homogeneous polynomial $P$ on $X$ whose canonical extension $\tilde{P}$ to $X^{\ast\ast}$ is not weak-star continuous at $0$ when restricted to any ball $B_{X^{\ast\ast}}(0,r),\ 0<r<1.$ Fix any $r$ and choose a net $(z_\alpha) \subset B_{X^{\ast\ast}}(0,r)$ that is
weak-star convergent to $0$ and $\tilde{P}(z_\alpha) \nrightarrow 0.$ Choosing a subnet if necessary, we may assume that $\tilde{P}(z_\alpha) \to b \neq 0.$ Applying
Proposition~\ref{gp-at-0}, if  $\varphi \in \mathcal M(\mathcal A)$ is a limit point of $\{\delta_{z_\alpha}\},$ then $\varphi \in \mathcal {GP}(\delta_0).$ Note that
$\delta_0(P) = 0 \neq b = \varphi(P),$ so that $\delta_0 \neq \varphi.$ Finally, $\varphi \in \mathcal M_0,$ since $\pi(\varphi) = \varphi|_{X^\ast},$ which shows
that $\varphi \in \mathcal {GP}(\delta_0) \backslash B_{X^{\ast\ast}}.$
\end{proof}

\begin{remark}
 Note that, under the hypothesis of the above result, by Proposition~\ref{basic 1}, each homomorphism $\varphi\in\mathcal {GP}(\delta_0)\backslash B_{X^{\ast\ast}}$ should be in some fiber over points in $B_{X^{\ast\ast}}$.
\end{remark}

\vspace{.5cm}

In the rest of this section, we will focus on the calculation of the pseudo-hyperbolic distance in some special, albeit important, situations. Here, we will have
to distinguish between the cases $\mathcal A = \mathcal A_u(B_X)$ and $\mathcal A = \mathcal H^\infty(B_X).$

\begin{proposition}{\label{automorphisms}} Let $X$ be a Banach space and
 $\A=\A_u(B_X)$ or $\A=\H^\infty(B_X)$. Suppose that there exists an automorphism $\Phi\colon B_X \to B_X$ and in addition for the case of $\A_u(B_X)$,  assume   $\Phi$ is uniformly continuous. Then, given $x\in B_X$ such that $\Phi(x)=0$,  for any $y\in B_X$ we have
    $$
    \rho(\delta_x,\delta_y) = \|\Phi(y)\|.
    $$
\end{proposition}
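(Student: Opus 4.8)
The plan is to exploit the automorphism $\Phi$ through the composition operator $C_\Phi(f) := f \circ \Phi$ and to reduce the computation to the distance $\rho(\delta_0, \delta_{\Phi(y)})$ already furnished by part (a) of Proposition~\ref{basic 1}. First I would verify that $C_\Phi$ is a well-defined isometric algebra homomorphism of $\mathcal A$ into itself. For $\mathcal H^\infty(B_X)$ this is immediate: $f \circ \Phi$ is holomorphic and, since $\Phi$ maps $B_X$ onto $B_X$, one has $\|f \circ \Phi\| = \|f\|$; moreover $\Phi^{-1}$ is again a holomorphic self-map of $B_X$, so $C_{\Phi^{-1}}$ inverts $C_\Phi$ and $C_\Phi$ is in fact a surjective isometry. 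For $\mathcal A_u(B_X)$ the hypothesis that $\Phi$ be uniformly continuous is exactly what ensures that $f\circ\Phi$ is again uniformly continuous, so that $C_\Phi$ maps $\mathcal A_u(B_X)$ into itself.

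The conceptual point is that a surjective isometric algebra automorphism leaves the pseudo-hyperbolic distance invariant. Indeed, a direct substitution $g = C_\Phi f$ in the definition of $\rho$, using that $C_\Phi$ is a norm-preserving bijection, shows $\rho(C_\Phi^{\ast}\varphi, C_\Phi^{\ast}\psi) = \rho(\varphi,\psi)$ for all $\varphi, \psi$; and the adjoint acts on point evaluations by $C_\Phi^{\ast}\delta_w = \delta_{\Phi(w)}$, since $(f\circ\Phi)(w) = f(\Phi(w))$. In the $\mathcal H^\infty$ case, where $C_\Phi$ is onto, this immediately yields
$$
\rho(\delta_x, \delta_y) = \rho(\delta_{\Phi(x)}, \delta_{\Phi(y)}) = \rho(\delta_0, \delta_{\Phi(y)}),
$$
using $\Phi(x)=0$; and part (a) of Proposition~\ref{basic 1} finishes the argument since $\Phi(y) \in B_X \subset B_{X^{\ast\ast}}$ gives $\rho(\delta_0, \delta_{\Phi(y)}) = \|\Phi(y)\|$.

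For $\mathcal A_u(B_X)$ I would split the equality into two inequalities, so as not to require that $\Phi^{-1}$ be uniformly continuous. The lower bound $\rho(\delta_x, \delta_y) \ge \|\Phi(y)\|$ comes directly from $C_\Phi$: every $g \in \mathcal A_u(B_X)$ with $\|g\| \le 1$ and $g(\Phi(y)) = 0$ produces $f = g \circ \Phi \in \mathcal A_u(B_X)$ with $\|f\| \le 1$, $f(y) = g(\Phi(y)) = 0$ and $|f(x)| = |g(\Phi(x))| = |g(0)|$, so taking the supremum and invoking part (a) gives $\rho(\delta_x,\delta_y) \ge \rho(\delta_0, \delta_{\Phi(y)}) = \|\Phi(y)\|$. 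For the reverse inequality I would pass to the larger algebra: since $\mathcal A_u(B_X) \subset \mathcal H^\infty(B_X)$ the supremum defining $\rho$ can only grow, whence $\rho(\delta_x, \delta_y)_{\mathcal A_u} \le \rho(\delta_x, \delta_y)_{\mathcal H^\infty}$, and the $\mathcal H^\infty$ computation of the previous paragraph already identifies the right-hand side as $\|\Phi(y)\|$.

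The step I expect to require the most care is precisely this asymmetry between the two algebras. The clean ``$\rho$ is invariant under $C_\Phi$'' argument needs $C_\Phi$ to be \emph{onto}, which for $\mathcal A_u$ would force $\Phi^{-1}$ to be uniformly continuous, an assumption not granted by the statement. Isolating the upper bound and routing it through $\mathcal H^\infty(B_X)$, where composition with $\Phi^{-1}$ is unproblematic, is the device that circumvents this; the only genuinely computational part is then checking that the substitution respects the three constraints simultaneously, namely the norm, the vanishing at $\Phi(y)$, and the value at $0$.
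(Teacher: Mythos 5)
Your proof is correct. It rests on the same two pillars as the paper's --- the Schwarz lemma and composition with $\Phi$ and $\Phi^{-1}$ --- but you package them differently. The paper argues directly: for the upper bound it applies the Schwarz lemma to $f\circ\Phi^{-1}\in\mathcal{H}^\infty(B_X)$, which vanishes at $0$ because $\Phi^{-1}(0)=x$ and $f(x)=0$, to get $|f(y)|=|(f\circ\Phi^{-1})(\Phi(y))|\le\|\Phi(y)\|$; for the lower bound it exhibits the single witness $f=x^\ast\circ\Phi$ with $x^\ast$ a norm-one functional attaining $x^\ast(\Phi(y))=\|\Phi(y)\|$. You instead observe that $C_\Phi$ is an isometric algebra automorphism whose adjoint sends $\delta_w$ to $\delta_{\Phi(w)}$ and leaves $\rho$ invariant, reducing everything to $\rho(\delta_0,\delta_{\Phi(y)})=\|\Phi(y)\|$ from Proposition~\ref{basic 1}(a); this is in effect the content of the paper's later Proposition~\ref{Gleason-isometry}(i), which you are proving first and then specializing. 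Both treatments must dodge the same obstruction in the $\mathcal{A}_u$ case, namely that $\Phi^{-1}$ is not assumed uniformly continuous: the paper drops out of $\mathcal{A}_u$ at the level of the individual function ($f\circ\Phi^{-1}$ is only claimed to lie in $\mathcal{H}^\infty(B_X)$, which suffices for Schwarz), while you drop out at the level of the supremum, bounding $\rho_{\mathcal{A}_u}$ by $\rho_{\mathcal{H}^\infty}$ and computing the latter. Your version buys a reusable invariance principle and makes the asymmetric roles of $\Phi$ and $\Phi^{-1}$ explicit; the paper's is shorter and yields a concrete extremal function for the lower bound.
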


\begin{proof}
We only prove the case $\A=\A_u(B_X)$. Let $f \in \mathcal A_u(B_X), \|f\| \leq 1,$ such that $\delta_x(f) = f(x) = 0.$ As $f
\circ \Phi^{-1}$ is in $\mathcal H^\infty(B_X)$, we can apply the Schwarz lemma to obtain
$$
|\delta_y(f)| = |f(y)| = |f \circ \Phi^{-1}(\Phi(y))| \leq \|\Phi(y)\|.
$$
Thus, from the definition of $\rho,$ we see that $\rho(\delta_x,\delta_y) \leq \|\Phi(y)\|.$

\par

For the reverse inequality, choose a norm one functional $x^\ast \in X^\ast$ such that $x^\ast(\Phi(y)) = \|\Phi(y)\|,$
and set
$f = x^\ast \circ \Phi.$ Since $f \in \mathcal A_u(B_X)$ has norm at most $1$ and satisfies $f(x) = 0,$ we get that
$$
\rho(\delta_x, \delta_y) \geq |\delta_y(f)| = \|\Phi(y)\|.
$$
\end{proof}

Note that the proof of Proposition~\ref{automorphisms} shows that $\rho(\delta_x,\delta_y)$ is independent of the particular choice  of the automorphism $\Phi.$

For subsequent embedding results, for a Banach space $X$ and $\A=\A_u(B_X)$ or $\A=\H^\infty(B_X)$ we will use the Gleason metric on $\M(\A)$. As we have already noted  in (i) at the beginning of this section, this metric is the restriction of the usual distance given by the norm on $\A^*$. When we refer to the Gleason metric for elements of $B_{X^{**}}$, the open unit ball $B_{X^{**}}$ will be regarded as a subset of $\M(\A)$. As we will see in the next proposition, under certain conditions, the automorphism $\Phi$ of Proposition \ref{automorphisms} induces  an isometry (for the Gleason metric) in the spectrum that sends some fibers onto different fibers. This type of isometry allows us to transfer information relative to Gleason parts intersecting one fiber to other fibers.  Recall that a \textit{finite type polynomial} on $X$ is a function in the algebra generated by $X^*$. Also, a Banach space $X$ is said to be {\em symmetrically regular} if  every continuous  linear mapping $T\colon X\to X^*$ which is symmetric (i. e. $T(x_1)(x_2)=T(x_2)(x_1)$ for all $x_1, x_2\in X$) turns out to be weakly compact.

\begin{proposition}{\label{Gleason-isometry}}
Let $X$ be a Banach space and $\A=\A_u(B_X)$ or $\A=\H^\infty(B_X)$. Suppose that there exists an automorphism $\Phi\colon B_X \to B_X$ and in addition for the case of $\A_u(B_X)$,  assume   $\Phi$ and $\Phi^{-1}$ are uniformly continuous.
\begin{enumerate}[\upshape (i)]
\item The mapping $\Phi$ induces a composition operator $C_\Phi\colon \A\to \A$, $C_\Phi(f)=f\circ \Phi$ such that  $\Lambda_\Phi:= C_\Phi^t|_{\mathcal{M}(\A)}\colon \mathcal M(\A)\to   \mathcal{M}(\A)$,
the restriction of its transpose to $\M(\A)$,  is an onto isometry for the Gleason metric with inverse $\Lambda_\Phi^{-1}=\Lambda_{\Phi^{-1}}$.

\item If for every $x^*\in X^*$, $x^*\circ\Phi$ and $x^*\circ\Phi^{-1}$ are uniform limits of finite type polynomials then for any $x\in \overline B_{X}$, $\Lambda_\Phi(\M_x)= \M_{\Phi(x)}$. If in addition $X$ is symmetrically regular, then, for any $z\in \overline B_{X^{**}}$, $\Lambda_\Phi(\M_z)= \M_{\widetilde\Phi(z)}$.
\end{enumerate}
\end{proposition}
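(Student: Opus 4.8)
The plan is to build everything on the composition operator $C_\Phi$ and to prove (i) first, since (ii) uses it.

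For (i), I would first check that $C_\Phi\colon f\mapsto f\circ\Phi$ is a well-defined unital algebra isomorphism of $\A$ onto itself: in the $\A_u$ case well-definedness uses the uniform continuity of $\Phi$ (and that of $\Phi^{-1}$ for surjectivity), while in the $\H^\infty$ case boundedness and holomorphy of $\Phi\colon B_X\to B_X$ suffice. Because $\Phi$ is a bijection of $B_X$ one has $\|f\circ\Phi\|=\|f\|$, so $C_\Phi$ is a surjective linear isometry with $C_\Phi^{-1}=C_{\Phi^{-1}}$. Its transpose is then a surjective isometry of $\A^*$; since $C_\Phi$ is multiplicative and unital, $C_\Phi^t\varphi=\varphi\circ C_\Phi$ is again a nonzero homomorphism, so $\Lambda_\Phi$ maps $\M(\A)$ into itself, and $C_\Phi C_{\Phi^{-1}}=\mathrm{id}$ gives $\Lambda_{\Phi^{-1}}\Lambda_\Phi=\mathrm{id}=\Lambda_\Phi\Lambda_{\Phi^{-1}}$. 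Finally, as $C_\Phi$ carries the unit ball of $\A$ onto itself, $\|\Lambda_\Phi\varphi-\Lambda_\Phi\psi\|=\sup_{\|f\|\le1}|(\varphi-\psi)(f\circ\Phi)|=\|\varphi-\psi\|$, which is exactly the asserted isometry for the Gleason metric. This part is routine bookkeeping.

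For (ii) the engine is a single computation. Given $\varphi\in\M_x$ with $x\in\overline B_X$ and $x^*\in X^*$, by definition $\pi(\Lambda_\Phi\varphi)(x^*)=\Lambda_\Phi\varphi(x^*)=\varphi(x^*\circ\Phi)$. Writing $x^*\circ\Phi=\lim_n P_n$ uniformly with $P_n$ finite type polynomials (this is where the hypothesis enters, and it also furnishes the continuous extension of $\Phi$ to $\overline B_X$ used when $\|x\|=1$), and using that $\varphi$ is a continuous homomorphism agreeing with $\delta_x$ on $X^*$, one gets $\varphi(P_n)=P_n(x)$ since each $P_n$ lies in the algebra generated by $X^*$. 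Letting $n\to\infty$ yields $\varphi(x^*\circ\Phi)=x^*(\Phi(x))$, i.e. $\pi(\Lambda_\Phi\varphi)=\Phi(x)$, so $\Lambda_\Phi(\M_x)\subseteq\M_{\Phi(x)}$. For the reverse inclusion I would apply this same forward statement to $\Phi^{-1}$ to get $\Lambda_{\Phi^{-1}}(\M_{\Phi(x)})\subseteq\M_{\Phi^{-1}(\Phi(x))}=\M_x$, and then apply the bijection $\Lambda_\Phi$ from part (i), using $\Lambda_\Phi\Lambda_{\Phi^{-1}}=\mathrm{id}$ and $\Phi^{-1}\circ\Phi=\mathrm{id}$ on $\overline B_X$, to obtain $\M_{\Phi(x)}\subseteq\Lambda_\Phi(\M_x)$, hence equality.

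For the bidual statement I would repeat the computation verbatim, now for $\varphi\in\M_z$ with $z\in\overline B_{X^{**}}$: for finite type $P_n$ one has directly $\varphi(P_n)=\widetilde{P_n}(z)$, and since the canonical extension $f\mapsto\widetilde f$ is an isometric algebra homomorphism \cite{davie-gamelin}, $\widetilde{P_n}\to\widetilde{x^*\circ\Phi}$ uniformly, giving $\varphi(x^*\circ\Phi)=\widetilde{x^*\circ\Phi}(z)=\langle\widetilde\Phi(z),x^*\rangle$ and thus $\pi(\Lambda_\Phi\varphi)=\widetilde\Phi(z)$, i.e. $\Lambda_\Phi(\M_z)\subseteq\M_{\widetilde\Phi(z)}$. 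The reverse inclusion follows by the same $\Phi^{-1}$-plus-bijectivity argument, provided one knows the composition identity $\widetilde{\Phi^{-1}}\circ\widetilde\Phi=\mathrm{id}$ on $\overline B_{X^{**}}$. I expect the main obstacle to lie precisely here: one must verify that the coordinate-wise extension $\langle\widetilde\Phi(z),x^*\rangle:=\widetilde{x^*\circ\Phi}(z)$ is a genuine biholomorphic automorphism of $B_{X^{**}}$ with inverse $\widetilde{\Phi^{-1}}$. The homomorphism property of the canonical extension reduces the composition law to finite type polynomials, where $\widetilde{Q}(\widetilde\Phi(z))=\widetilde{Q\circ\Phi}(z)$ is immediate; but concluding that $\widetilde\Phi$ maps the open ball into itself and is bijective with $\widetilde\Phi{}^{-1}=\widetilde{\Phi^{-1}}$ (so that $\M_{\widetilde\Phi(z)}$ is the correct fiber and the reverse inclusion closes) is exactly the point at which the symmetric regularity of $X$ becomes indispensable.
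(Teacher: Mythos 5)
Your proposal is correct and follows essentially the same route as the paper: part (i) via the mutually inverse composition operators $C_\Phi$, $C_{\Phi^{-1}}$ and their transposes, and part (ii) by evaluating a homomorphism in $\M_z$ on finite type polynomials to identify $\pi(\Lambda_\Phi\varphi)$ with $\widetilde\Phi(z)$, closing the reverse inclusion with $\widetilde{\Phi^{-1}}\circ\widetilde\Phi=\mathrm{id}$, which is exactly where the paper also invokes symmetric regularity (via \cite[Corollary 2.2]{choi-garcia-kim-maestre}). The only cosmetic difference is that you treat the $\overline B_X$ case separately and in more detail, whereas the paper subsumes it into the bidual argument.
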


\begin{proof}
To prove (i), just notice that for $f\in\A$ and $\varphi\in\M(\A)$,
$$
\Lambda_{\Phi^{-1}}(\Lambda_\Phi(\varphi))(f)=\Lambda_\Phi(\varphi)\big(f\circ \Phi^{-1}\big)=\varphi(f).
$$
Through this equality it is easily seen that $\|\Lambda_\Phi(\varphi)-\Lambda_\Phi(\psi)\|=\|\varphi-\psi\|$, for all $\varphi, \psi\in\M(\A)$.

It is enough  to prove (ii) in the case $X$ is symmetrically regular. Fix $z\in \overline B_{X^{**}}$ and take $\varphi\in \M_z$. Given $x_1^*, \ldots, x_n^*$ in $X^*$ as $\varphi$ is multiplicative, we have that
$$ \varphi(x_1^*\cdots x_n^*)=\varphi(x_1^*)\cdots \varphi(x_n^*)=z(x_1^*)\cdots z(x_n^*).$$
Thus, since any polynomial $Q$ of  finite type is a linear combination of elements as above, we have
$$ \varphi(Q)=\widetilde Q(z).$$
 By hypothesis, for any $x^*\in X^*$  there exists a sequence $(Q_k)$ of polynomials of finite type that converges uniformly to $x^*\circ \Phi$ on $B_X$. Hence, the sequence $(\widetilde Q_k)$ converges to $\widetilde x^*\circ \widetilde\Phi$ uniformly on $ B_{X^{**}}$ and $\widetilde \Phi$ admits a unique extension to $\overline B_{X^{**}}$ through weak-star continuity.  Thus,
$$\Lambda_\Phi(\varphi)(x^*)= \varphi(x^*\circ \Phi)= \lim_k \varphi(Q_k)=\lim_k \widetilde Q_k(z)=(\widetilde\Phi (z))(x^*).$$
Consequently, $\Lambda_{\Phi}(\M_z)\subset \M_{\widetilde\Phi(z)}$.  Now, the reverse inclusion follows from (i) because, since $X$ is symmetrically regular and  arguing as in the proof of  \cite[Corollary 2.2]{choi-garcia-kim-maestre},  we know that $\widetilde{\Phi^{-1}}\circ \widetilde\Phi=Id$. Therefore,
$\Lambda_{\Phi}(\M_z)= \M_{\widetilde\Phi(z)}$.
\end{proof}

 To conclude this section, we give three examples of these results.

\begin{example}{\label{ex: c0}}
Let $X = c_0$ and fix a point $x=(x_n) \in B_{c_0}$. Define the mapping $\Phi_x\colon  B_{c_0} \to B_{c_0}$ as follows:
$$
\Phi_x(y) = (\eta_{x_1}(y_1), \eta_{x_2}(y_2),\dots ),
$$
where $\eta_\alpha(\lambda) = \frac{\alpha - \lambda}{1 - \overline{\alpha}\lambda}, \ \alpha, \lambda \in \D.$ In this case $\Phi_x$ is a uniformly continuous automorphism  ($\Phi_x^{-1} = \Phi_x$) with $\Phi_x(x)=0$ and so, for any $y\in B_{c_0}$, $$\rho(\delta_x,\delta_y) = \| \Phi_x(y)\| = \sup_{n \geq 1 }\Big| \frac{x_n - y_n}{1 - \overline{x_n}y_n}\Big| = \sup_{n \geq 1} \rho(\delta_{x_n},\delta_{y_n}).$$\\
Also, $\Lambda_{\Phi_x}$ is an onto isometry for the Gleason metric in $\M(\A)$ both for $\A= \A_u(B_{c_0})$ or $\A=\H^\infty(B_{c_0})$. Moreover, $\Lambda_{\Phi_x}(\M_z)=\M_{\widetilde\Phi_x(z)}$ for any $z\in  \overline B_{\ell_\infty}$.
\end{example}

In the next section, we will discuss the more complicated, more interesting extension of the previous example to $z\in\overline{B}_{\ell_\infty}$; see Theorem~\ref{thm:delta-rho}.

\begin{example}{\label{ex: ell-2-ball}} (\cite[Lemma 4.4]{aron-carando-gamelin-lassalle-maestre})
Let $X = \ell_2$ and fix a point $x \in B_{\ell_2}.$ Define the mapping $\beta_x\colon B_{\ell_2}\to B_{\ell_2}$ as follows:
$$
\beta_x(y) = \frac{1}{1 + \sqrt{1 - \|x\|^2}} \langle\frac{x - y}{1 - \langle y,x\rangle}, x\rangle x + \sqrt{1 - \|x\|^2} \frac{x-y}{1 - \langle y,x\rangle}
$$
$(y \in B_{\ell_2}).$ From~\cite[Proposition 1, p.132]{renaud}, we know that $\beta_x$ is an automorphism from $B_{\ell_2}$ onto itself, with inverse map $\beta_x^{-1} = \beta_x$ and $\beta_x(x)=0$.\\
Also, by expanding $1/[1-\langle y,x\rangle]$ as a geometric series $\sum \langle y,x\rangle ^n$ and noting that the series converges uniformly on $\overline{B}_{\ell_2}$, we see that
$\beta_x(y) = g(y)x+h(y)y$, where the functions $g$ and $h$ are in
$\mathcal A_u(B_{\ell_2})$. Thus, $\beta_x$ is uniformly continuous.
Applying Proposition~\ref{automorphisms}, we see that for all $x, y \in B_{\ell_2}, \
\rho(\delta_x,\delta_y) = \|\beta_x(y)\|.$ Also, by Proposition~\ref{Gleason-isometry}, $\Lambda_{\beta_x}$ is an onto isometry for the Gleason metric in $\M(\A)$, both for $\A= \A_u(B_{\ell_2})$ or $\A=\H^\infty(B_{\ell_2})$. Moreover,  as Proposition~\ref{Gleason-isometry} (ii) holds (see \cite[Lemma 4.3]{aron-carando-gamelin-lassalle-maestre}) $\Lambda_{\beta_x}(\M_y)=\M_{\beta_x(y)}$ for all $y\in\overline B_{\ell_2}$.
\end{example}

\begin{example}{\label{ex:L(H)}}
Let $H$ be an infinite dimensional Hilbert space and let $X = \mathcal L(H)$ be the Banach space of all bounded linear operators from $H$ into itself. Fix $R\in B_{\mathcal L(H)}$ and denote by $R^*$ its adjoint operator. Define the mapping $\Phi_R$ on $B_{\mathcal L(H)}$ as follows:
$$
\Phi_R(T)=(I-RR^*)^{\frac 12}(T - R)(I - R^*T)^{-1}(I-R^*R)^{\frac 12},
$$
($T\in B_{\mathcal L(H)}$). Note that $\Phi_R\colon B_{\mathcal L(H)}\to B_{\mathcal L(H)}$ is an automorphism with inverse map $\Phi_{-R}$ and $\Phi_R(R)=0$. As in the example above, it can be seen that $\Phi_R$ is  uniformly continuous. Then, by Proposition~\ref{automorphisms}, for $R, S\in B_{\mathcal L(H)}$ we obtain $\rho(\delta_R,\delta_S) = \|\Phi_R(S)\|.$ Again, by Proposition~\ref{Gleason-isometry}, $\Lambda_{\Phi_R}$ is an onto isometry for the Gleason metric in $\M(\A)$, both for $\A= \A_u(B_{\mathcal L(H)})$ or $\A=\H^\infty(B_{\mathcal L(H)})$.
\end{example}
\smallskip

\section{Gleason parts for $\mathcal M(\mathcal A_u(B_{c_0})).$}

Compared to other infinite dimensional Banach spaces, what is unusual about $X = c_0$ is that, in relative terms, there are very few continuous polynomials $P\colon c_0 \to \C.$ All
such polynomials are norm limits of finite linear combinations of elements of $c_0^\ast = \ell_1.$ As a consequence, there are very few holomorphic functions on $c_0$ \cite{dineen}. In
particular, every $f \in \mathcal A_u(B_{c_0})$ is a uniform limit of such polynomials.
Thus, since any homomorphism is automatically continuous, its action on $\mathcal A_u(B_{c_0})$ is completely determined by
its action on $c_0^\ast.$ In other words, $\mathcal M(\mathcal A_u(B_{c_0}))$
is precisely $\{ \delta_{z} \ | \ \ z \in \overline{B}_{\ell_\infty} \}.$ Note that if $c_0$ were replaced by $\ell_p,$ this
approximation result would be false, and in fact $\mathcal M(\mathcal A_u(B_{\ell_p}))$ is considerably larger and more complicated than
$\overline B_{\ell_p} \approx \{\delta_z \ | \ z \in \overline B_{\ell_p}\}$ (see, e.g., \cite{farmer}).\\

Our aim here will be to get a reasonably complete description of the Gleason parts of $\mathcal M(\mathcal A_u(B_{c_0})).$ As just mentioned, our work is
greatly helped by the fact that we know exactly what $\mathcal M(\mathcal A_u(B_{c_0}))$ is, namely that it can be associated with
$\overline{B}_{\ell_\infty}.$   A special role is played by homomorphisms $\delta_z$ where $z$ belongs to the distinguished boundary $\T^\N$, the set of all elements $z=(z_n)$ such that $|z_n|=1$ for all $n$. Also, notice that compared with the finite dimensional situation, there is a new and interesting ``wrinkle'' here
in that there are unit vectors $z = (z_n)_n\in\overline B_{\ell_\infty}$ all of whose coordinates have absolute value smaller than $1.$   We begin with
a straightforward lemma.

\begin{lemma}\label{restriction} For any $\varnothing \neq \N_0 \subset \N,$ let $\Gamma\colon\ell_\infty \to \ell_\infty(\N_0)$ be the projection mapping taking $z = (z_j)_{j \in \N} \mapsto
\Gamma(z) = (z_j)_{j \in \N_0}.$ Then for all $z, w \in \overline{B}_{\ell_\infty},$ the following inequality holds:
$$\|\delta_{\Gamma(z)} - \delta_{\Gamma(w)}\| \leq \|\delta_z - \delta_w\|.$$
\end{lemma}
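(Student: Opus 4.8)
The plan is to realize both sides as suprema of test-function differences over unit balls, and then to pull back every admissible test function on the smaller space $c_0(\N_0)$ to an admissible one on $c_0(\N)$ by means of the restriction operator, so that the supremum defining the left-hand side is dominated by the one defining the right-hand side. First I would introduce the linear restriction operator $R\colon c_0(\N)\to c_0(\N_0)$, $R(x)=(x_j)_{j\in\N_0}$, and record the three facts I will use: $\|R\|\le 1$ (so $R$ maps $B_{c_0}$ into $B_{c_0(\N_0)}$ and is uniformly continuous), its bitranspose $R^{**}\colon\ell_\infty(\N)\to\ell_\infty(\N_0)$ is exactly the projection $\Gamma$, and consequently $\|\Gamma(z)\|\le\|z\|$ for every $z$.

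Next I would fix an arbitrary $g\in\mathcal A_u(B_{c_0(\N_0)})$ with $\|g\|\le 1$ and set $f:=g\circ R$. Because $R$ is linear of norm at most $1$ and carries $B_{c_0}$ into $B_{c_0(\N_0)}$, the composition $f$ is holomorphic and uniformly continuous on $B_{c_0}$, whence $f\in\mathcal A_u(B_{c_0})$ with $\|f\|\le\|g\|\le 1$. The crucial step is the identity $\tilde f=\tilde g\circ\Gamma$ on $\overline B_{\ell_\infty}$, i.e.\ that the canonical extension commutes with composition by the linear map $R$. I would verify this first on finite type polynomials: since the canonical extension is an algebra homomorphism it suffices to treat a single coordinate functional $x^*=e_j^*$ with $j\in\N_0$, for which $\widetilde{e_j^*\circ R}(z)=z_j=\widetilde{e_j^*}(\Gamma(z))$ is immediate from $R^{**}=\Gamma$; multiplicativity and linearity then give $\widetilde{Q\circ R}=\tilde Q\circ\Gamma$ for every finite type polynomial $Q$. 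The general case follows because, as recalled at the start of Section~2, each $g\in\mathcal A_u(B_{c_0(\N_0)})$ is a uniform limit of finite type polynomials $Q_k$, and the canonical extension is an isometry, so $\tilde g=\lim_k\tilde Q_k$ uniformly on $\overline B_{\ell_\infty(\N_0)}$ and correspondingly $\tilde f=\lim_k\widetilde{Q_k\circ R}=\lim_k \tilde Q_k\circ\Gamma=\tilde g\circ\Gamma$.

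With the compatibility in hand the conclusion is short: for all $z,w\in\overline B_{\ell_\infty}$,
$$
|\tilde g(\Gamma(z))-\tilde g(\Gamma(w))|=|\tilde f(z)-\tilde f(w)|\le\|\delta_z-\delta_w\|,
$$
using $\|f\|\le 1$ in the last inequality. Taking the supremum over all $g$ with $\|g\|\le 1$ gives $\|\delta_{\Gamma(z)}-\delta_{\Gamma(w)}\|\le\|\delta_z-\delta_w\|$, as desired. I expect the only real (and rather mild) obstacle to be the verification of the identity $\tilde f=\tilde g\circ\Gamma$; once the homomorphism property of the canonical extension and the density of finite type polynomials are invoked, everything else is routine bookkeeping.
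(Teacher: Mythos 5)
Your proposal is correct and follows essentially the same route as the paper: pull back each unit-ball test function on $B_{c_0(\N_0)}$ through the norm-one restriction operator, check that the canonical extension of the composition is the composition of the extensions with $\Gamma$, and compare the two suprema. The paper simply labels this compatibility an ``easy verification,'' whereas you supply the density-of-finite-type-polynomials argument for it; the substance is identical.
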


\begin{proof} Clearly, $\Gamma$ is a linear operator having norm 1, and $\Gamma(c_0) = c_0(\N_0).$ Thus each $f \in \mathcal A_u(B_{c_0(\N_0)})$
generates a function $g \in \mathcal A_u(B_{c_0})$ given by $g = f \circ \Gamma|_{c_0}$ having the same norm as $f$. An easy verification shows that
the extension of $g$ to $\mathcal A_u(B_{\ell_\infty})$ is given by $\tilde{g} = \tilde f \circ \Gamma.$ Therefore for all $z, w \in \ell_\infty, \|z\|, \|w\| \leq 1,$

$$\|\delta_{\Gamma(z)} - \delta_{\Gamma(w)}\| = \sup \{| \tilde{f}(\Gamma(z)) - \tilde{f}(\Gamma(w))| \ | \ f \in \mathcal A_u(B_{c_0(\N_0)}), \  \|f\| \leq 1\}$$
$$\leq \sup \{ | \tilde{g}(z) - \tilde{g}(w)| \ | \ g \in \mathcal A_u(B_{c_0}),\ \|g\| \leq 1\} = \| \delta_z - \delta_w\|.  $$
\end{proof}
\bigskip

Another way to restate Lemma~\ref{restriction} is as follows: if $\delta_z \in \mathcal{GP}(\delta_w),$ then $\delta_{\Gamma(z)} \in \mathcal {GP}(\delta_{\Gamma(w)}).$ Since $\N_0$ is allowed to be finite, say of cardinal $k$, if $\delta_z$ and $\delta_w$ are in the same Gleason part, then their projections onto finite coordinates (viewed as being in $\D^k$) are also in the same Gleason part. Our next result examines the  situation: Suppose that $z, w \in \overline{B}_{\ell_{\infty}}$ are such that $\delta_z$ and $\delta_w$ are in the same Gleason part. What can we say about the coordinates where these points differ and where these points are identical?

\begin{lemma}\label{differ}
For $z, w \in \overline{B}_{\ell_\infty}$, let $\N_0 = \{n \in \N \ | \ z_n \neq w_n\}$ and $\Gamma\colon\ell_\infty \to \ell_\infty(\N_0)$ be the projection as in Lemma~\ref{restriction}. Then $$\|\delta_z - \delta_w\|=\|\delta_{\Gamma(z)} - \delta_{\Gamma(w)}\|.$$
\end{lemma}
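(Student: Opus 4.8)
The plan is to establish the reverse of the inequality already furnished by Lemma~\ref{restriction}; since that lemma gives $\|\delta_{\Gamma(z)} - \delta_{\Gamma(w)}\| \le \|\delta_z - \delta_w\|$, it suffices to prove $\|\delta_z - \delta_w\| \le \|\delta_{\Gamma(z)} - \delta_{\Gamma(w)}\|$. We may assume $z \neq w$, so that $\N_0 \neq \varnothing$; write $\N_1 = \N \setminus \N_0$ and let $u = (z_n)_{n \in \N_1} = (w_n)_{n \in \N_1}$ be the common tail, noting $\|u\| \le 1$. For $v \in \ell_\infty(\N_0)$ denote by $v \vee u \in \ell_\infty$ the vector agreeing with $v$ on $\N_0$ and with $u$ on $\N_1$; observe that $\|v \vee u\| \le 1$ whenever $\|v\| \le 1$, and crucially that $\Gamma(z) \vee u = z$ and $\Gamma(w) \vee u = w$.

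The core of the argument is, given $f \in \mathcal A_u(B_{c_0})$ with $\|f\| \le 1$, to manufacture a competitor $g \in \mathcal A_u(B_{c_0(\N_0)})$ with $\|g\| \le 1$ by \emph{freezing} the $\N_1$-coordinates at $u$, namely $g(y) := \tilde f(y \vee u)$ for $y \in B_{c_0(\N_0)}$. First I would check that $g$ lies in $\mathcal A_u(B_{c_0(\N_0)})$ with $\|g\| \le 1$: using that every element of $\mathcal A_u(B_{c_0})$ is a uniform limit of finite-type polynomials, pick polynomials $P_k \to f$ uniformly on $B_{c_0}$ (so $\tilde P_k \to \tilde f$ uniformly on $\overline B_{\ell_\infty}$). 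Substituting the fixed values $u$ for the $\N_1$-variables turns each $P_k$ into a polynomial $Q_k$ in the $\N_0$-variables, and $Q_k \to g$ uniformly on $B_{c_0(\N_0)}$; hence $g$ is a uniform limit of polynomials, so $g \in \mathcal A_u(B_{c_0(\N_0)})$, and $\|g\| = \sup_{\|y\|<1} |\tilde f(y \vee u)| \le \|\tilde f\| = \|f\| \le 1$ because $\|y \vee u\| \le 1$.

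Next I would identify the canonical extension of $g$ as $\tilde g(v) = \tilde f(v \vee u)$ for every $v \in \overline B_{\ell_\infty(\N_0)}$. This follows by passing to the limit in $\tilde Q_k(v) = P_k(v \vee u) = \tilde P_k(v \vee u)$: the left side converges to $\tilde g(v)$ and the right side to $\tilde f(v \vee u)$. Specializing to $v = \Gamma(z)$ and $v = \Gamma(w)$ and using $\Gamma(z) \vee u = z$, $\Gamma(w) \vee u = w$ yields $\tilde g(\Gamma(z)) = \tilde f(z)$ and $\tilde g(\Gamma(w)) = \tilde f(w)$. Consequently $|\tilde f(z) - \tilde f(w)| = |\tilde g(\Gamma(z)) - \tilde g(\Gamma(w))| \le \|\delta_{\Gamma(z)} - \delta_{\Gamma(w)}\|$, and taking the supremum over all admissible $f$ gives $\|\delta_z - \delta_w\| \le \|\delta_{\Gamma(z)} - \delta_{\Gamma(w)}\|$, completing the proof.

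I expect the main obstacle to be the two compatibility checks involving the canonical (Davie--Gamelin) extension when $u$ lies on the boundary, i.e. when some $|u_n| = 1$: in that regime $y \vee u$ sits on the sphere of $\ell_\infty$, so one cannot simply invoke holomorphy of $\tilde f$ by composition, and both the membership $g \in \mathcal A_u(B_{c_0(\N_0)})$ and the extension formula $\tilde g(v) = \tilde f(v \vee u)$ must be routed through the finite-type polynomial approximation rather than through direct substitution into $\tilde f$. Verifying that this substitution commutes with the extension --- essentially the analogue for insertions of the relation $\tilde g = \tilde f \circ \Gamma$ used in Lemma~\ref{restriction} --- is the step requiring care.
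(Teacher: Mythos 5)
Your proof is correct and follows essentially the same route as the paper: your insertion map $v \mapsto v \vee u$ is exactly the paper's map $\Theta_z$, and the competitor $g(y)=\tilde f(y\vee u)$ is the paper's $\tilde g\circ\Theta_z|_{c_0(\N_0)}$. The only difference is that you justify the membership $g\in\mathcal A_u(B_{c_0(\N_0)})$ and the extension identity $\tilde g(v)=\tilde f(v\vee u)$ via finite-type polynomial approximation, steps the paper dispatches with ``it is easy to check.''
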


\begin{proof} Fix $z\in \overline B_{\ell_\infty}$ and define $\Theta_z\colon\ell_\infty(\N_0) \to \ell_\infty$ by:
$$
(\Theta_z(u))_n = \begin{cases} u_n & {\rm \ if \ } n \in \N_0,\\
                            z_n & {\rm \ if \ } n \notin \N_0.
                    \end{cases}
$$
Given $g \in \mathcal A_u(B_{c_0}), \ \|g\| \leq 1,$ let $f = \tilde{g} \circ \Theta_z|_{c_0(\N_0)}.$ Note that $f$ is well-defined since
whenever $u \in \overline{B}_{\ell_\infty(\N_0)}$ then $\Theta_z(u) \in \overline{B}_{\ell_\infty}.$ It is easy to check that $f \in \mathcal A_u(B_{c_0(\N_0)}),$
$\|f\| \leq 1, $ and that $\tilde{f} = \tilde{g} \circ \Theta_z \in \mathcal A_u(B_{\ell_\infty(\N_0)}).$ From the definition of $\N_0,$ we see that

\begin{eqnarray*}
\|\delta_z - \delta_w\| & = & \sup \{ |\tilde{g}(z) - \tilde{g}(w)| \ | \ g \in \mathcal A_u(B_{c_0}), \|g\| \leq 1\}\\
& = & \sup \{ |\tilde{g}(\Theta_z \circ \Gamma(z)) - \tilde{g}(\Theta_z \circ \Gamma(w))| \ | \ g \in \mathcal A_u(B_{c_0}), \|g\| \leq 1\}\\
& \leq & \sup \{ | \tilde{f}(\Gamma(z)) - \tilde{f}(\Gamma(w))| \ | \ f \in \mathcal A_u(B_{c_0(\N_0)}), \|f\| \leq 1\}\\
& = & \|\delta_{\Gamma(z)} - \delta_{\Gamma(w)}\|,
\end{eqnarray*}
and this, with the previous lemma, completes the proof.
\end{proof}

One consequence of this result is that if $z\in \overline{B}_{\ell_\infty}$ with $|z_n| < 1$, for some $n$, then any $w\in \overline{B}_{\ell_\infty}$ such that  $w_j = z_j,$ for all $j \neq n$, and $|w_n| < 1$, satisfies that $\delta_z$ and $\delta_w$ are in the same Gleason part. In particular, the only Gleason parts that are singleton points are the evaluations at points in the distinguished boundary $\T^\N$ of $\overline{B}_{\ell_\infty},$ i.e. the points in the Shilov boundary of $\mathcal M(\mathcal A_u(B_{c_0})).$\\

\begin{lemma}\label{cor-restriction}
For each $n \in \N,$ let $\Gamma_n\colon \ell_\infty \to \ell_{\infty}(\{1,2,\dots,n\})$ be the natural projection. If $z$ and $w$ are both in $\overline{B}_{\ell_\infty},$ then
$$
\|\delta_z - \delta_w\| = \lim_{n \to \infty} \|\delta_{\Gamma_n(z)} - \delta_{\Gamma_n(w)}\| = \sup_{n \in \N} \|\delta_{\Gamma_n(z)} - \delta_{\Gamma_n(w)}\|.
$$
\end{lemma}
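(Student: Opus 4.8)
The plan is to prove the two equalities in Lemma~\ref{cor-restriction} by combining the monotonicity supplied by Lemma~\ref{restriction} with a density/approximation argument that lets us reduce any test function to one depending on finitely many coordinates. First I would observe that the projections $\Gamma_n$ satisfy $\{1,\dots,n\} \subset \{1,\dots,n+1\}$, so by Lemma~\ref{restriction} (applied with $\N_0 = \{1,\dots,n\}$ and the larger index set $\{1,\dots,n+1\}$ playing the role of $\N$) the sequence $\|\delta_{\Gamma_n(z)} - \delta_{\Gamma_n(w)}\|$ is nondecreasing in $n$, and again by Lemma~\ref{restriction} each term is bounded above by $\|\delta_z - \delta_w\|$. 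This immediately gives that the sequence converges, that its limit equals its supremum, and that
$$
\sup_{n\in\N}\|\delta_{\Gamma_n(z)} - \delta_{\Gamma_n(w)}\| = \lim_{n\to\infty}\|\delta_{\Gamma_n(z)} - \delta_{\Gamma_n(w)}\| \leq \|\delta_z - \delta_w\|.
$$
So the second equality is free, and the entire content of the lemma is the reverse inequality $\|\delta_z - \delta_w\| \leq \sup_n \|\delta_{\Gamma_n(z)} - \delta_{\Gamma_n(w)}\|$.

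For the reverse inequality I would exploit the structure of $\mathcal A_u(B_{c_0})$ emphasized at the start of Section~2: every $f \in \mathcal A_u(B_{c_0})$ is a uniform limit of finite linear combinations of products of coordinate functionals from $c_0^* = \ell_1$. Given $\varepsilon > 0$ and a test function $g \in \mathcal A_u(B_{c_0})$ with $\|g\| \le 1$, I would approximate $g$ uniformly on $B_{c_0}$ by a function $P$ that depends on only finitely many coordinates, say the first $N$, with $\|g - P\| < \varepsilon$; since the coordinate functionals $e_j^*$ generate the relevant polynomials and only finitely many appear in the approximating polynomial, such an $N$ exists. The canonical extensions satisfy $\|\tilde g - \tilde P\| < \varepsilon$ as well, because the extension map is a norm-preserving algebra homomorphism. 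Then $\tilde P(z)$ and $\tilde P(w)$ depend only on $\Gamma_N(z)$ and $\Gamma_N(w)$, so
$$
|\tilde P(z) - \tilde P(w)| \leq \|\delta_{\Gamma_N(z)} - \delta_{\Gamma_N(w)}\| \leq \sup_{n}\|\delta_{\Gamma_n(z)} - \delta_{\Gamma_n(w)}\|,
$$
using that $\tilde P$, viewed through the factorization $P = Q \circ \Gamma_N$, corresponds to an admissible norm-one test function on $B_{c_0(\{1,\dots,N\})}$ after normalization. Combining the triangle inequality $|\tilde g(z) - \tilde g(w)| \le |\tilde P(z) - \tilde P(w)| + 2\varepsilon$ and taking the supremum over all such $g$ then letting $\varepsilon \to 0$ yields $\|\delta_z - \delta_w\| \le \sup_n \|\delta_{\Gamma_n(z)} - \delta_{\Gamma_n(w)}\|$.

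The step I expect to be the main obstacle is making the finite-coordinate approximation genuinely rigorous rather than merely plausible. One must be careful that the uniform approximation of $g$ by a polynomial holds on all of $B_{c_0}$ (so that passing to the extension on $\overline{B}_{\ell_\infty}$ is valid) and that the approximating object can be chosen to factor exactly through $\Gamma_N$ with controlled norm; a naive truncation of a polynomial need not have norm $\le 1$, so I would either absorb the small norm inflation into the $\varepsilon$ bookkeeping or renormalize and track the resulting error. An alternative that sidesteps some of this is to use the explicit density statement for $\mathcal A_u(B_{c_0})$ together with Lemma~\ref{differ}: one could phrase the finite-coordinate reduction as a limit of the maps $\Theta$–type extensions already constructed, thereby reusing the machinery rather than rebuilding the approximation from scratch. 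In either route, the essential point is that the weak type of norm $\|\cdot\|$ on $\mathcal A_u(B_{c_0})^*$ is determined by finitely-supported test functions up to arbitrarily small error, and it is exactly this finite determinacy that the supremum on the right-hand side captures.
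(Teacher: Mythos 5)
Your proof is correct, but it runs the approximation in the opposite direction from the paper. The paper fixes a near-optimal test function $f$ and perturbs the \emph{points}: it observes that $\Gamma_n(u)\to u$ in the $w(\ell_\infty,\ell_1)$ topology and that $\tilde f$ is weak-star continuous on $\overline B_{\ell_\infty}$ (because $f$ is a uniform limit of finite-type polynomials), so $\tilde f(\Gamma_{n}(z))\to\tilde f(z)$ and $\tilde f(\Gamma_{n}(w))\to\tilde f(w)$, and then a triangle inequality finishes. You instead perturb the \emph{function}, replacing $g$ by a nearby $P$ that factors through $\Gamma_N$. Both arguments are powered by the same density fact from the start of Section~2; yours avoids the intermediate step of establishing weak-star continuity of the canonical extension, at the cost of the norm bookkeeping you already flag (which is handled exactly as you suggest: $\|P\|\le 1+\varepsilon$, so renormalize and absorb the error). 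One small imprecision to repair: a finite-type polynomial, i.e.\ a finite linear combination of products of elements of $\ell_1$, need \emph{not} depend on finitely many coordinates --- the coordinate functionals $e_j^*$ generate only a dense subalgebra, not all of the algebra generated by $\ell_1$. So after approximating $g$ by a finite-type polynomial you need one further truncation, replacing each $x^*\in\ell_1$ occurring in it by a finitely supported functional (possible since such functionals are dense in $\ell_1$ and each monomial is Lipschitz on the ball in its factors); only then does $P$ genuinely factor through some $\Gamma_N$. With that step inserted, your argument is complete, and the paper's route is arguably slicker here only because the weak-star continuity of $\tilde f$ is reused elsewhere.
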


\begin{proof}
First, Lemma~\ref{restriction} implies that the sequence $(\|\delta_{\Gamma_n(z)} - \delta_{\Gamma_n(w)}\|)$ is increasing and bounded by $\|\delta_z - \delta_w\|.$ Note also that for each $u \in \overline{B}_{\ell_\infty}, \ \Gamma_n(u) \overset{w(\ell_\infty,\ell_1)}{\longrightarrow} u$, and if $f$ is in $\mathcal A_u(B_{c_0}),$ it follows that $\tilde{f} \in \mathcal A_u(B_{\ell_\infty})$ is weak-star continuous. Consequently,
$\tilde{f}(\Gamma_n(u)) \to \tilde{f}(u)$ as $n \to \infty.$ Therefore, for any $\varepsilon > 0$ take $f \in \mathcal A_u(B_{c_0}), \|f\| \leq 1,$ such that $|\tilde{f}(z) - \tilde{f}(w)|>\|\delta_z -\delta_w\|-\frac{\varepsilon}{2}$. Then,
we can find $n_0 \in \N$ such that both of the following hold:
$$
|\tilde{f}(\Gamma_{n_0}(z)) - \tilde{f}(z)| < \frac{\varepsilon}{4} \quad {\rm  and }\quad  |\tilde{f}(\Gamma_{n_0}(w)) - \tilde{f}(w)| < \frac{\varepsilon}{4}.
$$

Hence, we see that
$$|\tilde{f}(z) - \tilde{f}(w)| \leq \frac{\varepsilon}{4} + |\tilde{f}(\Gamma_{n_0}(z)) - \tilde{f}(\Gamma_{n_0}(w))| + \frac{\varepsilon}{4} \leq \|\delta_{\Gamma_{n_0}(z)} - \delta_{\Gamma_{n_0}(w)}\| + \frac{\varepsilon}{2}.$$

From this, we obtain that  $ \|\delta_z - \delta_w\| \leq \|\delta_{\Gamma_{n_0}(z)} - \delta_{\Gamma_{n_0}(w)}\| + \varepsilon,$ and the lemma follows.
\end{proof}

For the subsequent description of the Gleason parts for $\mathcal M(\mathcal A_u(B_{c_0}))$ we introduce the following notation.
For each $\lambda\in \D$ and $0<r<1$, we denote the {\it pseudo-hyperbolic $r$-disc} centered at $\lambda$ by
$$
\mathcal D_r(\lambda)=\Big\{\mu\in\D\ |\  \rho(\delta_\lambda, \delta_\mu) = \Big| \frac{\lambda - \mu}{1 - \overline{\lambda}\mu}\Big| < r\Big\}.
$$

\begin{theorem}\label{thm:delta-rho}
Let $z = (z_n)$ and $w = (w_n)$ be vectors in $\overline{B}_{\ell_\infty}.$ Then
\begin{equation}\label{(6.1):Cole-Gamelin-Johnson}
\|\delta_z - \delta_w\| = \sup_{n \in \N} \|\delta_{z_n} - \delta_{w_n}\|.
\end{equation}
Moreover, if $\N_0=\{n \in \N \ | \ z_n \neq w_n\}$ then
\begin{equation}\label{ro=sup}
\rho(\delta_z, \delta_w) = \sup_{n \in \N} \rho(\delta_{z_n}, \delta_{w_n})=\sup_{n \in \N_0} \Big|\frac{z_n - w_n}{1 - \overline{z_n}w_n}\Big|.
\end{equation}
Hence, given $z = (z_n)\in\overline{B}_{\ell_\infty}$  we have
$$
\GP(\delta_z)=\bigcup_{0<r<1}\{\delta_w\ |\  w_n=z_n \ \textrm{if}\ |z_n|=1 \ \textrm{and}\  w_n\in\mathcal D_r(z_n)\ \textrm{if}\ |z_n|<1\,\}.
$$\end{theorem}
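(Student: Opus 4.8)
The plan is to prove the norm identity \eqref{(6.1):Cole-Gamelin-Johnson} first and then read off both \eqref{ro=sup} and the description of $\GP(\delta_z)$ from it. The transfer between the two statements is routine once one rewrites the relation \eqref{Gleason-metric} as $\|\varphi-\psi\|=\frac{2\rho(\varphi,\psi)}{1+\sqrt{1-\rho(\varphi,\psi)^2}}=g(\rho(\varphi,\psi))$, where $g\colon[0,1]\to[0,2]$ is a continuous, strictly increasing bijection. Since $g$ commutes with suprema and is injective, \eqref{(6.1):Cole-Gamelin-Johnson} is equivalent to the first equality in \eqref{ro=sup}; the second equality in \eqref{ro=sup} is immediate because each coordinate $n\notin\N_0$ contributes $\rho(\delta_{z_n},\delta_{w_n})=0$ and hence does not affect the supremum.

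To prove \eqref{(6.1):Cole-Gamelin-Johnson}, I would first apply Lemma~\ref{cor-restriction} to reduce to the finite-dimensional (polydisc) identity
$$
\|\delta_{\Gamma_n(z)}-\delta_{\Gamma_n(w)}\|=\max_{1\le k\le n}\|\delta_{z_k}-\delta_{w_k}\|\qquad(n\in\N),
$$
since taking the supremum over $n$ and using $\sup_{n}\max_{k\le n}=\sup_{k}$ then gives the claim. The inequality ``$\ge$'' here is immediate from Lemma~\ref{restriction} applied to each singleton $\N_0=\{k\}$. For ``$\le$'', set $m=\max_{k\le n}\rho(\delta_{z_k},\delta_{w_k})$, so that the right-hand side equals $g(m)$; if $m=1$ there is nothing to prove because $\|\cdot\|\le 2=g(1)$, so assume $m<1$.

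The heart of the argument is this finite case with $m<1$, and the only genuine obstacle is the presence of boundary coordinates, where the automorphism $\Phi_x$ of Example~\ref{ex: c0} is unavailable. I would dispose of them as follows. Let $\N_0^{(n)}=\{k\le n : z_k\neq w_k\}$; by Lemma~\ref{differ} the distance is unchanged upon projecting onto $\N_0^{(n)}$. For each $k\in\N_0^{(n)}$ we have $z_k\neq w_k$ together with $\rho(\delta_{z_k},\delta_{w_k})\le m<1$, and since $\rho(\delta_\lambda,\delta_\mu)=1$ whenever $|\lambda|=1$ and $\lambda\neq\mu$ (the boundary remark recorded in Section~1), this forces $|z_k|<1$ and $|w_k|<1$. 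As $\N_0^{(n)}$ is finite, the projected points are interior points of $B_{c_0(\N_0^{(n)})}$, so the explicit automorphism of Example~\ref{ex: c0} (equally, Proposition~\ref{automorphisms}), applied to the finite index set $\N_0^{(n)}$, yields $\rho=\max_{k\in\N_0^{(n)}}\rho(\delta_{z_k},\delta_{w_k})=m$ and therefore $\|\cdot\|=g(m)$, as required.

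Finally, I would deduce the description of $\GP(\delta_z)$ directly from \eqref{ro=sup}: by definition $\delta_w\in\GP(\delta_z)$ if and only if $\sup_{n}\rho(\delta_{z_n},\delta_{w_n})<1$. If this holds, choose $r<1$ strictly above the supremum; then for each coordinate with $|z_n|=1$ the boundary remark forces $w_n=z_n$, while for each coordinate with $|z_n|<1$ one gets $w_n\in\mathcal D_r(z_n)$, placing $\delta_w$ in the stated union. Conversely, any $\delta_w$ lying in the set indexed by some $r<1$ satisfies $\rho(\delta_{z_n},\delta_{w_n})\le r$ for all $n$, whence $\rho(\delta_z,\delta_w)\le r<1$. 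The recurring subtlety throughout is that, unlike in Example~\ref{ex: c0}, a point $w\in\overline B_{\ell_\infty}$ need not be an interior point of any ball, since its coordinates may approach the unit circle even when each $|w_n|<1$; no single automorphism is then available, and the finite truncation of Lemma~\ref{cor-restriction} is exactly what reduces matters to honestly interior finite-dimensional configurations.
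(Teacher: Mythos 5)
Your proof is correct and follows essentially the same route as the paper's: reduce to finite sections via Lemma~\ref{cor-restriction}, discard coordinates where $z_k=w_k$ via Lemma~\ref{differ}, handle boundary coordinates with $z_k\neq w_k$ by the observation that $\rho=1$ there, compute $\rho$ in the remaining interior finite-dimensional case with the automorphism of Example~\ref{ex: c0}, and convert between $\|\cdot\|$ and $\rho$ through the monotone bijection in \eqref{Gleason-metric}. Your organization of the case split by $m=1$ versus $m<1$ is just a relabeling of the paper's split into ``some $|z_k|=1$ or $|w_k|=1$'' versus ``all coordinates interior,'' and the remaining details you supply are exactly those the paper leaves implicit.
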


\begin{proof}  By Lemma~\ref{cor-restriction}, it is enough to see that $\|\delta_{\Gamma_n(z)} - \delta_{\Gamma_n(w)}\|=\sup_{1\le k \le n}\|\delta_{z_k}- \delta_{w_k}\|$ for all $n$, where $\Gamma_n\colon \ell_\infty \to \ell_{\infty}(\{1,2,\dots,n\})$ is the natural projection. By Lemma~\ref{differ}, we may also assume that $z_k\ne w_k$ for $k=1,\ldots, n$.

First, suppose that there exists $k$, $1\le k\le n$, such that $|z_k|=1$ or $|w_k|=1$. Then, $\|\delta_{z_k}- \delta_{w_k}\|=2$ and Lemma~\ref{restriction} gives the equality. Now, assume that $|z_k|, |w_k| <1$ for all $1\le k\le n$. Note that~\eqref{Gleason-metric} describes $\|\delta_{\Gamma_n(z)} - \delta_{\Gamma_n(w)}\|$ in terms of $\rho(\delta_{\Gamma_n(z)}, \delta_{\Gamma_n(w)})$ by an increasing function. Using Example~\ref{ex: c0} we see that $\rho(\delta_{\Gamma_n(z)}, \delta_{\Gamma_n(w)})=\sup_{1\le k \le n} \rho(\delta_{z_k}, \delta_{w_k})$ and both equalities \eqref{(6.1):Cole-Gamelin-Johnson} and \eqref{ro=sup} follow from this.

Now, from $\rho(\delta_z, \delta_w) = \sup_{n \in \N} \rho(\delta_{z_n}, \delta_{w_n})$, we have
$$\GP(\delta_z)=\bigcup_{0<r<1} \{\delta_w\ |\ \rho(\delta_{z_n}, \delta_{w_n})<r, \textrm{ for all } n\}.$$
The conclusion trivially holds.
\end{proof}

Notice that if the algebra is  $\H^\infty(B_{c_0})$ and the vectors $z, w$ belong to the open unit ball $B_{\ell_\infty}$, equation \eqref{(6.1):Cole-Gamelin-Johnson} coincides with equation (6.1) of~\cite[Theorem 6.6]{cole-gamelin-johnson}. The next example illustrates  how Theorem~\ref{thm:delta-rho} can be used.

\begin{example}\label{norm-one}
Consider the following  points in the sphere of $\ell_\infty:$ $z = (1 - \frac{1}{n})_n,
w = (1 - \frac{1}{n^2})_n,$ and $u = (1 - \frac{1}{2n})_n.$  Then $\delta_z$ and $\delta_w$
are in different Gleason parts, while $\delta_z$ and $\delta_u$ are in the same part.\\

\vspace{.1cm}

To see this, observe that
$$
\rho(\delta_z, \delta_w) = \sup_{n \in \N} \rho(\delta_{z_n},\delta_{w_n}) =
\sup_{n \in \N} \left|\frac{z_n - w_n}{1 - \overline{z_n}w_n}\right| = \sup_{n \in \N} \left|\frac{n - n^2}{n^2 + n - 1}\right| = 1,
$$
which shows the first assertion. Similarly,
$$
\rho(\delta_z, \delta_u) =\sup_{n \in \N} \rho(\delta_{z_n},\delta_{u_n}) = \sup_{n \in \N} \left| \frac{z_n - u_n}
{1 - \overline{z_n}u_n}\right| =\sup_{n \in \N} \frac{n}{3n - 1} = \frac{1}{2}.
$$
Thus, $\delta_z$ and $\delta_u$ belong to the same Gleason part.
\end{example}


In order to give a more descriptive insight of the size of the Gleason parts, let us introduce some notation.
Given $z= (z_n)\in\overline B_{\ell_\infty}$, let $\N_1$ be the (possibly empty) set  $\N_1=\{n\in\N\ |\ |z_n|=1\}$. Now, $\N\setminus\N_1$ can be split into two disjoint sets $\N_2\cup \N_3$ such that
$$
\sup_{n\in \N_2} |z_n|<1 \qquad \textrm{ and }\qquad \sup_{n\in \N_3} |z_n|=1.
$$
Note that $\N_2$ and $\N_3$ could be empty and that they are not uniquely determined. For instance, if $\N_3$ is infinite and $\N_2$ is finite, we may redefine $\N_3$ as the union of $\N_3$ and $\N_2$ and redefine $\N_2$  to be empty. Also, $\N_3$ cannot be finite.

In this way we write $\N$ as a disjoint union satisfying the above conditions: $\N=\N_1\cup \N_2\cup \N_3$ and, therefore, the Gleason part containing $\delta_z$ satisfies:
$$
\GP(\delta_z)=\left\{\delta_w\ |\  w_n=z_n\ \text{if}\ n\in\N_1,\ \sup_{n\in\N_2}|w_n|<1 \ \text{and}\ \sup_{n\in\N_3}\left|\frac{z_n-w_n}{1-\overline{z}_nw_n}\right|<1 \,\right\}.
$$
Now, taking into account all the possibilities for the sets $\N_1$, $\N_2$ and $\N_3$ we obtain a more specific description of the different Gleason parts.

\begin{corollary}\label{cor:summary}
Given $z\in\overline B_{\ell_\infty}$ and $\N_1$, $\N_2$, $\N_3$ defined as above, the Gleason part $\GP(\delta_z)$ satisfies one of the following:
\begin{enumerate}[\upshape (i)]

\item If $\N=\N_2$  then $z\in B_{\ell_\infty}$ and $\GP(\delta_z)=\GP(\delta_0)=\{\delta_w\ |\ w \in B_{\ell_\infty} \}$. This produces the identification $\mathcal {GP}(\delta_z) \thickapprox B_{\ell_\infty}.$

\item  If $\N=\N_1$  then $z = (z_n)\in\T^\N$. So,  $\mathcal {GP}(\delta_z)=\{\delta_z\}.$

\item If $\N_3=\varnothing$ and $\N_1,\,\N_2\not=\varnothing$ then $\GP(\delta_z)=\{\delta_w\ |\  w_n=z_n \ \text{if}\ n\in\N_1 \ \text{and}\  \sup_{n\in \N_2} |w_n| < 1\,\}$. So,
    \begin{itemize}
\item    if $\#(\N_2)=k$ then  $\mathcal {GP}(\delta_z) \thickapprox \mathbb D^k$,
\item  if $\N_2$ is infinite, $\mathcal {GP}(\delta_z) \thickapprox B_{\ell_\infty}$.
\end{itemize}
Both identifications are isometries with respect to the Gleason metric.

\item If $\N_3$ is infinite and $\N_2=\varnothing$, then  $\mathcal {GP}(\delta_z)$ contains $\D^k$ for every $k\in\N$ and this inclusion is an isometry for the Gleason metric. There is also a continuous injection of $B_{\ell_\infty}$ into $\mathcal {GP}(\delta_z)$.

\item If both $\N_2$ and $\N_3$ are infinite, then $\mathcal {GP}(\delta_z)$
contains an isometric copy of $B_{\ell_\infty}$, for the Gleason metric.
\end{enumerate}
\end{corollary}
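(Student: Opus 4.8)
The plan is to read off every case from the explicit description of the part displayed immediately before the statement,
$$\GP(\delta_z)=\Big\{\delta_w : w_n=z_n\ (n\in\N_1),\ \sup_{n\in\N_2}|w_n|<1,\ \sup_{n\in\N_3}\Big|\tfrac{z_n-w_n}{1-\overline{z}_nw_n}\Big|<1\Big\},$$
together with two facts coming from Theorem~\ref{thm:delta-rho} and Example~\ref{ex: c0}: the coordinatewise identity $\|\delta_z-\delta_w\|=\sup_n\|\delta_{z_n}-\delta_{w_n}\|$, and the invariance of the pseudo-hyperbolic metric under the disc automorphisms $\eta_\alpha$. First I would make the five cases exhaustive: after the normalization preceding the statement we may assume $\N_3$ is empty or infinite, and then split according to whether $\N_1,\N_2$ are empty, finite, or infinite. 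The only subtlety is that a finite nonempty $\N_2$ together with an infinite $\N_3$ is absorbed into $\N_3$, which lands us in case (iv); with this convention (i)--(v) exhaust all configurations.

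The cases (i)--(iii) are bookkeeping on the displayed set. In (i) the hypothesis $\N=\N_2$ forces $\sup_n|z_n|<1$, so the constraint reduces to $\sup_n|w_n|<1$ and $\GP(\delta_z)=\{\delta_w:w\in B_{\ell_\infty}\}=\GP(\delta_0)$, the identification being $w\mapsto\delta_w$. In (ii), $\N=\N_1$ pins every coordinate of $w$ to that of $z$, so $\GP(\delta_z)=\{\delta_z\}$. In (iii) the only free coordinates are those in $\N_2$; since any two $\delta_w,\delta_{w'}\in\GP(\delta_z)$ agree on $\N_1$, Lemma~\ref{differ} together with the coordinatewise norm formula gives $\|\delta_w-\delta_{w'}\|=\sup_{n\in\N_2}\|\delta_{w_n}-\delta_{w'_n}\|$. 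Projection onto the $\N_2$-coordinates is therefore a bijective isometry onto $\D^k$ when $\#\N_2=k$ (a finite supremum is $<1$ exactly when each term is) and onto $B_{\ell_\infty(\N_2)}\cong B_{\ell_\infty}$ when $\N_2$ is infinite.

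The content lies in cases (iv) and (v), where I must construct the announced embeddings. For the isometric copies of $\D^k$ in (iv) I would fix distinct $n_1,\dots,n_k\in\N_3$ and send $(\lambda_1,\dots,\lambda_k)\in\D^k$ to $\delta_w$ with $w_{n_j}=\lambda_j$ and $w_n=z_n$ elsewhere; membership in the part holds because $\sup_{n\in\N_3}\rho(\delta_{z_n},\delta_{w_n})=\max_{j}\rho(\delta_{z_{n_j}},\delta_{\lambda_j})$ is a finite maximum of quantities $<1$, and the norm formula makes the map an isometry onto $\D^k$. \textbf{The hard part is the injection of $B_{\ell_\infty}$}: the naive choice $w_{m_j}=u_j$ (after enumerating $\N_3=\{m_1,m_2,\dots\}$) fails, because $\sup_{n\in\N_3}|z_n|=1$ forces $\sup_j\rho(\delta_{z_{m_j}},\delta_{u_j})=1$ and throws $\delta_w$ out of the part already for $u=0$. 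The remedy is to recenter through the automorphisms $\eta_{z_{m_j}}$: set $w_{m_j}=\eta_{z_{m_j}}(u_j)$ and $w_n=z_n$ on $\N_1$, so that $\rho(\delta_{z_{m_j}},\delta_{w_{m_j}})=|u_j|$ and hence $\sup_{n\in\N_3}\rho(\delta_{z_n},\delta_{w_n})=\|u\|<1$, placing $\delta_w$ in $\GP(\delta_z)$. Injectivity is inherited from that of each $\eta_{z_{m_j}}$, and the invariance of $\rho$ under these automorphisms yields $\|\delta_w-\delta_{w'}\|=\sup_j\|\delta_{u_j}-\delta_{u'_j}\|=\|\delta_u-\delta_{u'}\|$, so the injection is continuous --- indeed isometric.

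For (v) the same recentering works on the infinite set $\N_2$: writing $\N_2=\{p_1,p_2,\dots\}$ and $M=\sup_{n\in\N_2}|z_n|<1$, the map $u\mapsto\delta_w$ with $w_{p_j}=\eta_{z_{p_j}}(u_j)$ (and $w_n=z_n$ otherwise) lands in the part because the increasing bound $|\eta_{z_{p_j}}(u_j)|=\rho(\delta_{z_{p_j}},\delta_{u_j})\le\frac{M+\|u\|}{1+M\|u\|}<1$ keeps $\sup_{n\in\N_2}|w_n|$ strictly below $1$, while $\sup_{n\in\N_3}\rho(\delta_{z_n},\delta_{w_n})=0$; the same $\rho$-invariance computation shows the map is an isometry onto its image, giving the isometric copy of $B_{\ell_\infty}$. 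The only quantitative input used throughout is the strong triangle inequality $\rho(\delta_a,\delta_b)\le\frac{|a|+|b|}{1+|a||b|}$, whose right-hand side is $<1$ for $a,b$ in a fixed subdisc of $\D$; this is exactly what guarantees that the relevant suprema stay strictly below $1$, and hence that the constructed homomorphisms remain inside the single Gleason part $\GP(\delta_z)$.
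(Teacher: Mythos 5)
Your proposal is correct, and for items (i)--(iii) it runs along the same lines as the paper (everything is read off from the description of $\GP(\delta_z)$ preceding the statement together with the coordinatewise norm formula of Theorem~\ref{thm:delta-rho} and Lemma~\ref{differ}); the paper in fact dismisses all of this, and item (v), with the single sentence that the isometry claims follow from Lemma~\ref{cor-restriction} and Theorem~\ref{thm:delta-rho}, reserving an argument only for the injection of $B_{\ell_\infty}$ in item (iv). There your route genuinely differs. The paper works with small \emph{Euclidean} discs: it picks radii $r_k>0$ so that $|z_{n_k}-w_{n_k}|<r_k$ forces $\rho(\delta_{z_{n_k}},\delta_{w_{n_k}})<\tfrac12$, and injects $B_{\ell_\infty}$ onto the product set $z+\prod_n C_n$ with $C_{n_k}=r_k\D$. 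You instead recenter with the M\"obius involutions, $w_{n_k}=\eta_{z_{n_k}}(u_k)$, which gives the exact identity $\rho(\delta_{z_{n_k}},\delta_{w_{n_k}})=|u_k|$ and, via the conformal invariance of $\rho$ on $\D$ together with \eqref{(6.1):Cole-Gamelin-Johnson}, the identity $\|\delta_w-\delta_{w'}\|=\|\delta_u-\delta_{u'}\|$. This buys you more than the statement asks for: your map is an isometric (in particular continuous and injective) copy of $B_{\ell_\infty}$ inside $\GP(\delta_z)$ in case (iv), whereas the paper's shrinking discs only yield a continuous injection and leave the continuity verification somewhat implicit. Your treatment of (v) by the same recentering on $\N_2$ (with the bound $|\eta_{z_{p_j}}(u_j)|\le\frac{M+\|u\|}{1+M\|u\|}<1$ keeping the image inside the part) is also valid, though there the recentering is not needed: one can simply place $(u_j)$ directly in the $\N_2$-coordinates, since $\sup_{n\in\N_2}|w_n|=\|u\|<1$ already guarantees membership, and Lemma~\ref{differ} plus Theorem~\ref{thm:delta-rho} give the isometry. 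Your observation that the case list is exhaustive only after absorbing a finite nonempty $\N_2$ into an infinite $\N_3$ matches the convention the paper sets up just before the statement.
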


\begin{proof} The results concerning isometries follow from Lemma~\ref{cor-restriction} and Theorem \ref{thm:delta-rho}. We only have to show the continuous injection of $B_{\ell_\infty}$ in item (iv).
If we write $\N_3=\{n_k\}_k$, for each $k$ there exists $r_k>0$ such that whenever $|z_{n_k}-w_{n_k}|<r_k$ we have $w_{n_k}\in\mathbb D$ and
$$
\left|\frac{z_{n_k} - w_{n_k}}{1 - \overline{z_{n_k}}w_{n_k}}\right|<\frac{1}{2}.
$$
Then, denoting $C_{n_k}=r_k\D$ and $C_n=\{0\}$ for $n\not\in \N_3$ we obtain that if $w\in z+ \prod_{n=1}^\infty C_n$ then $\delta_w\in\mathcal {GP}(\delta_z)$. Since it is clear how to inject $B_{\ell_\infty}$ onto the set $z+ \prod_{n=1}^\infty C_n$, we derive the injection of $B_{\ell_\infty}$ into $\mathcal {GP}(\delta_z)$.
\end{proof}
\medskip

\section{Gleason parts for $\mathcal M(\mathcal H^\infty(B_{c_0}))$}

Some of our knowledge about the Gleason parts of $\M(\A_u(B_{X}))$ passes to $\M(\mathcal H^\infty(B_{X}))$ if we consider the restriction mapping $\Upsilon_u\colon \M_{\mathcal H^\infty(B_X)}\to \M_{\A_u(B_X)}$. With obvious notation, it is clear that for any $\varphi, \psi\in \M_{\mathcal H^\infty(B_X)}$,
$$
\rho(\varphi, \psi)\ge \rho_u(\Upsilon_u(\varphi), \Upsilon_u(\psi)).
$$
Therefore, if $\GP_{\A_u}(\Upsilon_u(\varphi))\not=\GP_{\A_u}(\Upsilon_u(\psi))$ we also have $\GP_{\H^\infty}(\varphi)\not= \GP_{\H^\infty}(\psi)$.

\begin{remark} \label{rem:GP in a fiber}
Let $X=c_0$ and consider $z,w\in S_{\ell_\infty}$ such that $\GP_{\A_u}(\delta_z)\not= \GP_{\A_u}(\delta_w)$. Then, for any $\varphi \in \M_z(\H^\infty(B_{c_0}))$ and  $\psi \in \M_w(\H^\infty(B_{c_0}))$, as $\Upsilon_u(\varphi)=\delta_z$ and $\Upsilon_u(\psi)=\delta_w$, we also have $\GP_{\H^\infty}(\varphi)\not= \GP_{\H^\infty}(\psi)$. In particular, if $z\in \overline B_{\ell_\infty}$ belongs to the distinguished boundary $\T^\N$, every $\varphi \in \M_z(\H^\infty(B_{c_0}))$ satisfies $\GP_{\H^\infty}(\varphi)\subset \M_z(\H^\infty(B_{c_0}))$. That is, the Gleason part of $\varphi$ is contained in the fiber over $z$.
\end{remark}

The following is somehow a counterpart to the above remark.

\begin{proposition}\label{reciproca} Let $z,w\in S_{\ell_\infty}$ be such that $\GP_{\A_u}(\delta_z)= \GP_{\A_u}(\delta_w)$.
Then there exist $\varphi \in \M_z(\H^\infty(B_{c_0}))$ and  $\psi \in \M_w(\H^\infty(B_{c_0}))$ satisfying $\GP_{\H^\infty}(\varphi)= \GP_{\H^\infty}(\psi)$.
\end{proposition}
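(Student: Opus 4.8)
The plan is to realize the required pair $\varphi,\psi$ as ultrafilter limits of point evaluations at interior points of $B_{c_0}$ that approximate $z$ and $w$ coordinatewise while keeping their mutual Gleason distance uniformly below $2$. First I would record what the hypothesis gives. Since $\delta_z$ and $\delta_w$ lie in the same part of $\M(\A_u(B_{c_0}))$, formula \eqref{ro=sup} of Theorem~\ref{thm:delta-rho} yields $\rho_0:=\rho_u(\delta_z,\delta_w)=\sup_{n}\rho(\delta_{z_n},\delta_{w_n})<1$. In particular, whenever $|z_n|=1$ one must have $z_n=w_n$ (otherwise the one-variable pseudo-hyperbolic distance in that coordinate would be $1$), and symmetrically; hence $\{n:|z_n|=1\}=\{n:|w_n|=1\}$ and $z,w$ agree on that set.

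Next I would manufacture interior approximants. For $N\in\N$ set $t_N=1-1/N$ and define $z^{(N)},w^{(N)}\in c_0$ by
\[
z^{(N)}_n=\begin{cases} z_n & n\le N,\ |z_n|<1,\\ t_N\, z_n & n\le N,\ |z_n|=1,\\ 0 & n>N,\end{cases}
\]
and symmetrically for $w^{(N)}$. Two routine checks matter: first, each $z^{(N)},w^{(N)}$ is finitely supported with all coordinates of modulus $<1$ (the finitely many untouched coordinates have modulus $<1$, and the dampened ones have modulus $\le t_N<1$), so they lie in $B_{c_0}\subset B_{\ell_\infty}$; second, $z^{(N)}\to z$ and $w^{(N)}\to w$ coordinatewise, hence in $\sigma(\ell_\infty,\ell_1)$. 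The crucial gain is that the construction never increases a coordinatewise distance: on the untouched coordinates it is exactly $\rho(\delta_{z_n},\delta_{w_n})\le\rho_0$, while on the dampened coordinates (where $z_n=w_n$) and on the truncated tail the two points coincide, giving distance $0$. Thus $\sup_n\rho(\delta_{z^{(N)}_n},\delta_{w^{(N)}_n})\le\rho_0$ for every $N$.

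Because $z^{(N)}$ and $w^{(N)}$ lie in the open ball $B_{\ell_\infty}$, the formula of \cite[Theorem~6.6]{cole-gamelin-johnson} (equation \eqref{(6.1):Cole-Gamelin-Johnson} for $\H^\infty$) gives $\|\delta_{z^{(N)}}-\delta_{w^{(N)}}\|=\sup_n\|\delta_{z^{(N)}_n}-\delta_{w^{(N)}_n}\|$; combined with \eqref{Gleason-metric}, which writes each one-variable Gleason norm as an increasing function of the corresponding pseudo-hyperbolic distance, the bound of the previous paragraph produces a constant $M:=(2-2\sqrt{1-\rho_0^2})/\rho_0<2$ with $\|\delta_{z^{(N)}}-\delta_{w^{(N)}}\|\le M$ for all $N$. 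Now fix a free ultrafilter $\mathcal U$ on $\N$; since $\M(\H^\infty(B_{c_0}))$ is $w^*$-compact, the limits $\varphi:=\lim_{\mathcal U}\delta_{z^{(N)}}$ and $\psi:=\lim_{\mathcal U}\delta_{w^{(N)}}$ exist in $\M(\H^\infty(B_{c_0}))$. For $a\in\ell_1=c_0^*$ we have $\varphi(a)=\lim_{\mathcal U}\langle z^{(N)},a\rangle=\langle z,a\rangle$ by coordinatewise convergence, so $\pi(\varphi)=z$, i.e. $\varphi\in\M_z(\H^\infty(B_{c_0}))$, and likewise $\psi\in\M_w(\H^\infty(B_{c_0}))$. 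Finally, for every $f$ with $\|f\|\le1$, $|\varphi(f)-\psi(f)|=\lim_{\mathcal U}|\delta_{z^{(N)}}(f)-\delta_{w^{(N)}}(f)|\le M$, so $\|\varphi-\psi\|\le M<2$ and therefore $\rho(\varphi,\psi)<1$, giving $\GP_{\H^\infty}(\varphi)=\GP_{\H^\infty}(\psi)$.

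I expect the only genuine obstacle to be the treatment of the modulus-one coordinates: a naive truncation $(z_1,\dots,z_N,0,\dots)$ of a sphere vector need not land in the open ball, which would make the Cole--Gamelin--Johnson formula unavailable. The dampening trick removes this difficulty precisely because $z$ and $w$ are forced to coincide on those coordinates, so shrinking them costs nothing in the coordinatewise distance. The remaining ingredients---coordinatewise and $w^*$ convergence, the ultrafilter limits being homomorphisms lying in the correct fibers, and the passage of the uniform Gleason bound to the limit---are routine.
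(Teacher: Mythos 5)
Your proof is correct and follows essentially the same route as the paper's: approximate $z$ and $w$ by interior points, use the Cole--Gamelin--Johnson sup formula together with \eqref{Gleason-metric} to obtain a uniform Gleason bound strictly below $2$, and pass to $w^*$-cluster points lying in $\M_z$ and $\M_w$. The only differences are cosmetic: the paper scales to $r_k z$, $r_k w$ and invokes the Schwarz--Pick lemma to control the coordinatewise pseudo-hyperbolic distances, whereas your truncate-and-dampen approximants (exploiting that $z_n=w_n$ on the modulus-one coordinates) preserve those distances exactly and even give the slightly sharper bound $\|\varphi-\psi\|\le\|\delta_z-\delta_w\|_{\A_u}$ in place of the paper's $(2+C)/2$.
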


\begin{proof} Fix real numbers $(r_k)$, with $|r_k|<1$ and $r_k\nearrow 1$. Consider the sequences in $B_{\ell_\infty}$:
$$
x^k=r_k z\to z\quad\textrm{ and }\quad y^k=r_k w\to w.
$$
Now, as $\M_z(\H^\infty(B_{c_0}))$ is $w^*$-compact, both $(\delta_{x^k})$ and  $(\delta_{y^k})$
admit $w^*$-convergent subnets $(\delta_{x^{k(\alpha)}})_\alpha$,  $(\delta_{y^{k(\alpha)}})_\alpha$ in $\M(\H^\infty(B_{c_0}))$. Say
$$
\delta_{x^{k(\alpha)}}{\longrightarrow}\varphi;
\qquad \qquad \delta_{y^{k(\alpha)}} {\longrightarrow}\psi.
$$
It is clear that $\varphi \in \M_z(\H^\infty(B_{c_0}))$ and $\psi \in \M_w(\H^\infty(B_{c_0}))$. Now, as $\GP_{\A_u}(\delta_z)= \GP_{\A_u}(\delta_w)$, by Theorem~\ref{thm:delta-rho} we have
$$
C = \sup_n \|\delta_{z_n}-\delta_{w_n}\|_{\M(\A_u(\mathbb D))} =\|\delta_z-\delta_w\|_{\M(\A_u(B_{c_0}))} < 2.
$$

Then, given $f\in \H^\infty(B_{c_0})$,  $\|f\|\le 1$, we can find $\alpha_0$ so that for any $\alpha\ge\alpha_0$,
$$
\left|\delta_{x^{k(\alpha)}}(f)-\varphi(f)\right| <\frac{2-C}{4}\quad \textrm{ and }\quad \left|\delta_{y^{k(\alpha)}}(f)-\psi(f)\right| <\frac{2-C}{4}.
$$

Therefore,
\begin{eqnarray*}
|\varphi(f)-\psi(f)| & \le  & \frac{2-C}{2} + \left|\delta_{x^{k(\alpha)}}(f)-\delta_{y^{k(\alpha)}}(f)\right| \\
& \le  & \frac{2-C}{2} + \left\|\delta_{x^{k(\alpha)}}-\delta_{y^{k(\alpha)}}\right\|_{\M(\H^\infty(B_{c_0}))}\\
& =  & \frac{2-C}{2} + \sup_n \left\|\delta_{x_n^{k(\alpha)}}-\delta_{y_n^{k(\alpha)}}\right\|,
\end{eqnarray*}
where the last equality, which is a version of  the statement of Theorem \ref{thm:delta-rho} for the spectrum  $\M(\H^\infty(B_{c_0}))$, appears in the proof of \cite[Theorem~6.5]{cole-gamelin-johnson}. Now, using the pseudo-hyperbolic distance for the unit disc $\mathbb D$ and  the Schwarz--Pick theorem applied to the function $f(z)=r_{k(\alpha)}z$,  for each fixed $n$ such that $z_n\not= w_n$ we have

\begin{eqnarray*}
\rho(\delta_{x_n^{k(\alpha)}},\delta_{y_n^{k(\alpha)}}) & = & \left| \frac{x_n^{k(\alpha)}-y_n^{k(\alpha)}}{1-\overline{x_n^{k(\alpha)}}y_n^{k(\alpha)}}\right| = \left| \frac{r_{k(\alpha)}(z_n-w_n)}{1-r_{k(\alpha)}^2\overline{z_n}w_n}\right|\\
&\le & \left| \frac{z_n-w_n}{1-\overline{z_n}w_n}\right|\le \rho_u (\delta_z,\delta_w).
\end{eqnarray*}
Then, $\left\|\delta_{x^{k(\alpha)}}-\delta_{y^{k(\alpha)}}\right\|_{\M(\H^\infty(B_{c_0}))} \le \|\delta_z-\delta_w\|_{\M(\A_u(B_{c_0}))} =C$.

Finally, $|\varphi(f)-\psi(f)|\le \frac{2-C}{2} + C= \frac{2+C}{2}$, for any $f\in \H^\infty(B_{c_0})$ with $\|f\|\le 1$. Therefore, $\|\varphi - \psi\|_{\M(\H^\infty(B_{c_0}))}\le \frac{2+C}{2}<2$ and the proof is complete.
\end{proof}

\bigskip

We next prove a kind of extension of the previous proposition. In \cite[Lemma 2.9]{aron-falco-garcia-maestre} it is shown that for $w\in \overline{B}_{\ell_\infty}$ and $b\in\D$ the fibers over $w$ and $(b,w)$ are homeomorphic. To recall the homeomorphism let us consider
$\Lambda_b\colon B_{c_0}\to B_{c_0}$ given by $\Lambda_b(z)=(b,z)$ and let us denote  by $S\colon B_{c_0}\to B_{c_0}$, the shift mapping $S(z)=(z_2, z_3,\dots)$. Now, the homomorphism between the fibers is given by
\begin{eqnarray*}
R_b\colon  \M_w &\to &\M_{(b,w)}\\
\varphi &\mapsto & (f\in\H^\infty(B_{c_0})\mapsto \varphi(f\circ \Lambda_b))
\end{eqnarray*}

Since both $\Lambda_b$ and  $S$ map the unit ball into the unit ball and $S\circ \Lambda_b=Id$ it is easy to see that $R_b$ is an isometry for the Gleason metric. Therefore, the fiber over $w$ and the fiber over $(b,w)$ (for any $w\in \overline{B}_{\ell_\infty}$) intersect the same ``number'' of Gleason parts.

From Remark \ref{rem:GP in a fiber} we know that if $z\in \T^{\mathbb N}$, then every $\varphi \in \M_z(\H^\infty(B_{c_0}))$ satisfies that the Gleason part of $\varphi$ is contained in the fiber over $z$. The next proposition will show us not only that this does not hold for the fibers over points outside $\T^{\mathbb N}$, but also that any Gleason part outside  $\T^{\mathbb N}$ must have elements from different fibers (in fact, at least from a \textit{disc} of fibers).

\begin{proposition} \label{R_b}
Given $b\in\mathbb D$, there exists $r_b>0$ such that if $|c-b|<r_b$ then, for all $\varphi\in\M(\H^\infty(B_{c_0}))$, $R_b(\varphi)$ and $R_c(\varphi)$ are in the same Gleason part.
\end{proposition}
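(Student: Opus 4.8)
The plan is to reduce the whole statement to the classical Schwarz--Pick lemma applied in the first coordinate. First I would observe that $R_b$ extends to all of $\M(\H^\infty(B_{c_0}))$ by the very same formula $R_b(\varphi)(f)=\varphi(f\circ\Lambda_b)$ (and likewise $R_c$), so that $R_b(\varphi)$ and $R_c(\varphi)$ are genuine elements of $\M(\H^\infty(B_{c_0}))$, lying in the fibers $\M_{(b,\pi(\varphi))}$ and $\M_{(c,\pi(\varphi))}$ respectively. Fix $\varphi$, and for each $f\in\H^\infty(B_{c_0})$ with $\|f\|\le 1$ introduce the single-variable function $G_f(\lambda):=\varphi(f\circ\Lambda_\lambda)=R_\lambda(\varphi)(f)$ for $\lambda\in\D$. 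By construction $R_b(\varphi)(f)=G_f(b)$ and $R_c(\varphi)(f)=G_f(c)$, so the entire proposition becomes a comparison of the values of the scalar functions $G_f$ at the two points $b$ and $c$.

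The key step, and the only place where real work is needed, is to show that each $G_f$ is a holomorphic map of $\D$ into $\overline{\D}$. Boundedness is immediate: since $\Lambda_\lambda$ maps $B_{c_0}$ into $B_{c_0}$ whenever $|\lambda|<1$, we have $\|f\circ\Lambda_\lambda\|\le\|f\|\le 1$, hence $|G_f(\lambda)|\le 1$. For holomorphy I would prove that the slice $\lambda\mapsto f\circ\Lambda_\lambda$ is a holomorphic curve in the Banach space $\H^\infty(B_{c_0})$ and then compose it with the bounded linear functional $\varphi$. Concretely, for $|\lambda|<\varrho<1$ the Cauchy integral formula in the first variable gives $f(\lambda,z)=\frac{1}{2\pi i}\int_{|\mu|=\varrho}\frac{f(\mu,z)}{\mu-\lambda}\,d\mu$, and because $|f|\le 1$ the resulting estimates for the difference quotients $\frac{f\circ\Lambda_{\lambda+h}-f\circ\Lambda_\lambda}{h}$ are \emph{uniform in} $z\in B_{c_0}$; thus these quotients converge in the $\H^\infty$-norm as $h\to 0$. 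Consequently $\lambda\mapsto f\circ\Lambda_\lambda$ is $\H^\infty$-holomorphic on $\D$, and $G_f=\varphi\circ(\lambda\mapsto f\circ\Lambda_\lambda)$ is holomorphic.

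With $G_f\colon\D\to\overline{\D}$ in hand the conclusion is immediate. From the definition of the pseudo-hyperbolic distance, $\rho(R_b(\varphi),R_c(\varphi))=\sup\{\,|R_b(\varphi)(f)|:\|f\|\le 1,\ R_c(\varphi)(f)=0\,\}=\sup\{\,|G_f(b)|:\|f\|\le 1,\ G_f(c)=0\,\}$. For any admissible $f$, $G_f$ is a holomorphic self-map of $\D$ (into $\overline{\D}$) vanishing at $c$, so Schwarz--Pick yields $|G_f(b)|\le\left|\frac{b-c}{1-\overline{c}b}\right|=\rho(\delta_b,\delta_c)$. Taking the supremum gives $\rho(R_b(\varphi),R_c(\varphi))\le\rho(\delta_b,\delta_c)<1$, so $R_b(\varphi)$ and $R_c(\varphi)$ always lie in the same Gleason part. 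Hence any $r_b$ with $|b|+r_b\le 1$ works (for instance $r_b=1-|b|$, whose only role is to keep $c\in\D$); in fact the estimate holds for every $c\in\D$, exhibiting the promised disc $\{(c,\pi(\varphi)):c\in\D\}$ of fibers all meeting a single Gleason part. I expect the holomorphic-dependence argument of the second paragraph to be the main obstacle, the remainder being a direct application of Schwarz--Pick.
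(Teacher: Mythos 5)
Your proof is correct, but it takes a genuinely different route from the paper's and in fact proves more. The paper also begins with the Cauchy integral formula in the first variable, but only to extract equicontinuity of $\overline B_{\H^\infty(\D)}$: this produces an $r_b>0$ with $|f(b,z)-f(c,z)|<1$ for all $\|f\|\le 1$, all $z\in B_{c_0}$ and all $|c-b|<r_b$, whence $\|R_b(\varphi)-R_c(\varphi)\|\le\sup_{\|f\|\le1}\|f\circ\Lambda_b-f\circ\Lambda_c\|\le 1<2$, and one concludes from the equivalence $\|\varphi-\psi\|<2\Leftrightarrow\rho(\varphi,\psi)<1$. You instead upgrade the Cauchy estimates to norm-holomorphy of the curve $\lambda\mapsto f\circ\Lambda_\lambda$ in $\H^\infty(B_{c_0})$, so that $G_f=\varphi\circ(\lambda\mapsto f\circ\Lambda_\lambda)$ is a holomorphic map of $\D$ into $\overline{\D}$, and then apply Schwarz--Pick. (One small point worth making explicit: when $G_f(c)=0$ the maximum principle rules out $|G_f|\equiv 1$, so $G_f$ really maps into the open disc and Schwarz--Pick applies; the case $G_f\equiv 0$ is trivial.) What your argument buys is the sharper, global estimate $\rho(R_b(\varphi),R_c(\varphi))\le\rho(\delta_b,\delta_c)$ for \emph{all} $b,c\in\D$, so the entire disc of fibers $\{(c,\pi(\varphi))\ |\ c\in\D\}$ meets the Gleason part of $R_b(\varphi)$ at once --- a conclusion the paper only reaches by chaining its local statement, as in the comment following the proposition. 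What the paper's argument buys is brevity: it never needs holomorphic dependence on $\lambda$, only uniform equicontinuity, at the price of a purely local and non-quantitative conclusion.
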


\begin{proof}
By the Cauchy integral formula, $\overline B_{\H^\infty(\mathbb D)}$ is an equicontinuous set of functions. Therefore, there exists $r_b>0$ such that, if $|c-b|<r_b$ then $c\in \mathbb D$ and $|g(b)-g(c)|<1$, for all $g\in B_{\H^\infty(\mathbb D)}$.

Hence, for $f\in \H^\infty(B_{c_0})$ with $\|f\|\le 1$ we have
$$
|f(b,z)-f(c,z)|<1, \qquad\textrm{ if } |c-b|<r_b,\ z\in B_{c_0}.
$$

Therefore, for every $\varphi\in\M(\H^\infty(B_{c_0}))$,

\begin{eqnarray*}
\|R_b(\varphi) - R_c(\varphi)\| & = & \sup_{ \|f\|\le 1} |R_b(\varphi)(f) - R_c(\varphi)(f)|\\
& = & \sup_{ \|f\|\le 1} |\varphi(f\circ\Lambda_b - f\circ\Lambda_c)| \\
&\le & \sup_{ \|f\|\le 1} \|f\circ\Lambda_b - f\circ\Lambda_c \|\\
&=& \sup_{ \|f\|\le 1} \sup_{z\in B_{c_0}} |f(b,z) - f(c,z)| \le 1.
\end{eqnarray*}
\end{proof}
\bigskip

It is clear that the previous result is also valid between the fibers of $w$ and $(w_1, b, w_2,\dots)$ or $(w_1, w_2, b, w_3,\dots)$ and so on. That means that the Gleason part of any morphism in the fiber over a point outside  $\T^{\mathbb N}$, must have elements from other fibers. In particular, there cannot be singleton Gleason parts outside  the fibers over the points in $\T^{\mathbb N}$.

\bigskip

Thus far, the above results show that in $\M(\H^\infty(B_{c_0}))$ there are Gleason parts intersecting different fibers (Propositions~\ref{reciproca} and \ref{R_b}) and there are Gleason parts completely contained in a fiber (Remark~\ref{rem:GP in a fiber}). These results do not provide information on the size of the Gleason parts. In order to understand this feature we appeal to the following result whose statement covers several versions appearing for instance in \cite[Lemma 1.1, p. 393]{garnett}, \cite[Lemma 2.1]{hoffman} and \cite[p. 162]{stout}.

\begin{proposition}
Let $X, Y$ be Banach spaces and $\Omega_X\subset X, \Omega_Y\subset Y$ be open convex subsets. Let $\A$ be a uniform algebra of analytic functions defined  on $\Omega_X$. Suppose that $\Phi\colon\Omega_Y\to \M(\A)$ is an analytic inclusion. Then $\Phi(\Omega_Y)$ is contained in only one Gleason part.
\end{proposition}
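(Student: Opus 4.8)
The plan is to reduce the statement to the one--variable Schwarz--Pick estimate applied along analytic discs. Recall that $\Phi$ being an \emph{analytic inclusion} means that for each $f\in\A$ the scalar function $y\mapsto\Phi(y)(f)$ is holomorphic on $\Omega_Y$; the injectivity of $\Phi$ will play no role. The heart of the argument is the following sublemma, which I would state and prove first: \emph{if $G\colon\D\to\M(\A)$ is a map for which $\lambda\mapsto G(\lambda)(f)$ is holomorphic on $\D$ for every $f\in\A$, then}
\[
\rho\bigl(G(\lambda),G(\mu)\bigr)\le\Bigl|\frac{\lambda-\mu}{1-\overline{\mu}\lambda}\Bigr|<1\qquad(\lambda,\mu\in\D),
\]
\emph{so that $G(\D)$ is contained in a single Gleason part.}

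To prove the sublemma, fix $\lambda,\mu\in\D$ and any $f\in\A$ with $\|f\|\le1$ and $G(\mu)(f)=0$. Put $g(\zeta):=G(\zeta)(f)$. Since every nonzero homomorphism of a (unital) uniform algebra has norm one, $|g(\zeta)|=|G(\zeta)(f)|\le\|f\|\le1$, so $g$ is a holomorphic self--map of $\D$ with $g(\mu)=0$. The Schwarz--Pick inequality then gives $|g(\lambda)|=\bigl|\frac{g(\lambda)-g(\mu)}{1-\overline{g(\mu)}g(\lambda)}\bigr|\le\bigl|\frac{\lambda-\mu}{1-\overline{\mu}\lambda}\bigr|$. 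Taking the supremum over all admissible $f$ yields the displayed bound; as $|\lambda|,|\mu|<1$ this is strictly less than $1$, so $G(\lambda)$ and $G(\mu)$ lie in the same part.

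It remains to connect any two images $\Phi(y_0),\Phi(y_1)$ by finitely many such analytic discs. Fix $y_0,y_1\in\Omega_Y$ and set $\gamma(\zeta):=y_0+\zeta(y_1-y_0)$ for $\zeta\in\C$. The set $U:=\gamma^{-1}(\Omega_Y)$ is open (by continuity of $\gamma$) and convex (being the preimage of the convex set $\Omega_Y$ under a real--affine map), and by convexity of $\Omega_Y$ it contains the segment $[0,1]$. Since $[0,1]$ is compact and $U$ open, there is $\delta>0$ with $\{\zeta:\mathrm{dist}(\zeta,[0,1])\le\delta\}\subset U$. Choose $N$ with $1/N<\delta$ and put $t_i=i/N$ for $0\le i\le N$. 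For each $i$ the round disc $D(t_i,\delta)$ lies in $U$, so $\lambda\mapsto\Phi\bigl(\gamma(t_i+\delta\lambda)\bigr)$ defines a map $\D\to\M(\A)$ to which the sublemma applies (it is analytic because $\Phi(\cdot)(f)$, $\gamma$, and $\zeta\mapsto t_i+\delta\zeta$ are all holomorphic). Since $t_i$ and $t_{i+1}$ correspond to the parameters $0$ and $(t_{i+1}-t_i)/\delta\in\D$, the sublemma gives $\rho\bigl(\Phi(\gamma(t_i)),\Phi(\gamma(t_{i+1}))\bigr)<1$, so consecutive points $\Phi(\gamma(t_i))$ lie in one Gleason part. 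As being in the same part is an equivalence relation, $\Phi(y_0)=\Phi(\gamma(0))$ and $\Phi(y_1)=\Phi(\gamma(1))$ lie in the same part; since $y_0,y_1$ were arbitrary, all of $\Phi(\Omega_Y)$ lies in a single part.

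The only genuinely delicate point is the sublemma, and specifically the verification that $\zeta\mapsto G(\zeta)(f)$ is a bona fide holomorphic self--map of $\D$ so that Schwarz--Pick applies: this rests on the norm--one bound $|\varphi(f)|\le\|f\|$ valid for homomorphisms of uniform algebras, together with the hypothesis that $\Phi$ is analytic. Everything after that---the convex reduction to one complex variable and the finite chaining---is routine, with convexity of $\Omega_Y$ entering precisely to guarantee that the segment $[y_0,y_1]$ stays in $\Omega_Y$ and hence that $U$ contains $[0,1]$.
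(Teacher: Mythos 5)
Your proof is correct. Note first that the paper does not actually prove this proposition: it is stated as a known result, with references to Garnett, Hoffman and Stout, so there is no ``paper's proof'' to compare against line by line. What you have written is precisely the standard argument underlying those classical references, carried out in full: the Schwarz--Pick sublemma for analytic discs in $\M(\A)$, followed by a chaining argument along the segment $[y_0,y_1]$, with the convexity of $\Omega_Y$ used exactly where it should be (to keep the segment, and hence a $\delta$-neighbourhood of $[0,1]$ in the parameter plane, inside the domain). The one point you gloss over slightly is in the sublemma: the bound $|g(\zeta)|=|G(\zeta)(f)|\le\|f\|\le 1$ only shows a priori that $g$ maps $\D$ into $\overline{\D}$, not into $\D$; to invoke Schwarz--Pick you should add that either $g$ is constant (hence $g\equiv 0$ since $g(\mu)=0$, and the bound is trivial) or, by the maximum modulus principle, $|g|<1$ throughout $\D$. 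With that one sentence added, the argument is complete, and it is also slightly more self-contained than the paper, which leaves the reader to chase the cited lemmas.
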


\medskip

\begin{remark}
Combining the above proposition with results of \cite{aron-falco-garcia-maestre} and \cite{cole-gamelin-johnson} we derive that \textit{most} of the fibers of $\M(\H^\infty(B_{c_0}))$ contain analytic copies of $B_{\ell_\infty}$ (or $\D$) and each of these copies should be in a single Gleason part. Specifically, we have the following:

\begin{enumerate}[\upshape (i)]
\item By \cite[Theorem 6.7]{cole-gamelin-johnson}, for each $z\in B_{\ell_\infty}$ the fiber over $z$ contains a copy of $B_{\ell_\infty}$. Hence, there is a \textit{thick} intersection of the fiber over $z$ with a Gleason part. This result can be extended to the case of the fibers over $z\in S_{\ell_\infty}$ such that $|z_n|=1$ for $n$ in a finite set $\mathbb N_1$ and $\sup_{n\not\in\mathbb N_1}|z_n|<1$ (see~\cite{dimant-singer}).

\item By \cite[Theorem 2.2]{aron-falco-garcia-maestre}, for each $z\in S_{\ell_\infty}$ with $|z_n|=1$ for all $n$ (or for infinitely many $n$'s \cite{dimant-singer}) the fiber over $z$ contains a copy of $B_{\ell_\infty}$. Hence, there is a \textit{thick} intersection of the fiber over $z$ with a Gleason part.
\item By \cite[Proposition 2.1]{aron-falco-garcia-maestre}, for each $z\in S_{\ell_\infty}$ that attains its norm in $B_{\ell_1}$ the fiber over $z$ contains an analytic copy of the disc $\D$ (which clearly is inside a single Gleason part).
\end{enumerate}
Note that the only case not covered by the previous items corresponds with that of those $z\in S_{\ell_\infty}$ with $|z_n|<1$ for all $n$.
\end{remark}
\medskip

Recall that given a compact set $K$ and  a uniform algebra $\mathcal{A}$ contained in $C(K)$ a point $x\in K$ is called a {\em strong boundary point} for $\mathcal{A}$ if for every neighborhood $V$ of $x$ there exists $f\in \mathcal{A}$ such that $\|f\|=f(x)=1$ and $|f(y)|<1$ if $y\in K\setminus V$.  We see in the next result that in the fiber over each $z\in \T^{\mathbb N}$ there is a strong boundary point. Since the Gleason part of a strong boundary point is just a singleton set, by (ii) of the above remark, we derive that the fiber over any  $z\in \T^{\mathbb N}$ intersects  a thick Gleason part and also a singleton Gleason part.
\medskip

\begin{proposition}
If $\mathcal S$ is the set of strong boundary points of $\M(\H^\infty(B_{c_0}))$ then $\pi(\mathcal S)=\T^{\mathbb N}$.
\end{proposition}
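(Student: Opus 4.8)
The plan is to establish the two inclusions $\pi(\mathcal S)\subseteq\T^{\mathbb N}$ and $\T^{\mathbb N}\subseteq\pi(\mathcal S)$ separately.

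For the inclusion $\pi(\mathcal S)\subseteq\T^{\mathbb N}$ I would argue by contraposition, using only the Gleason part information already at hand. If $\varphi\in\mathcal S$ is a strong boundary point, then $\GP_{\H^\infty}(\varphi)=\{\varphi\}$ is a singleton, as recalled above. On the other hand, the discussion following Proposition~\ref{R_b} shows that whenever $\pi(\varphi)=z\notin\T^{\mathbb N}$ the Gleason part of $\varphi$ must contain elements of fibers other than $\M_z$, so it cannot reduce to a point. Hence every $\varphi\in\mathcal S$ satisfies $\pi(\varphi)\in\T^{\mathbb N}$.

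The core of the argument is the reverse inclusion: given $z=(z_n)\in\T^{\mathbb N}$, I must exhibit a strong boundary point in the fiber $\M_z$. First I would realize $\M_z$ as a peak set. For each $n$ put $f_n(w)=\tfrac12(1+\overline{z_n}\,w_n)\in\A_u(B_{c_0})\subset\H^\infty(B_{c_0})$; this is affine in the coordinate $w_n=e_n^*(w)$, has norm $1$, and any $\varphi\in\M(\H^\infty(B_{c_0}))$ satisfies $\varphi(f_n)=\tfrac12(1+\overline{z_n}\,\pi(\varphi)_n)$. Since $|z_n|=1$ and $|\pi(\varphi)_n|\le 1$, we get $|\varphi(f_n)|\le 1$, with value exactly $1$ precisely when $\pi(\varphi)_n=z_n$. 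Setting $f=\sum_{n\ge1}2^{-n}f_n$ (a norm-convergent series, so $f\in\H^\infty(B_{c_0})$ and $\|f\|\le 1$), a term-by-term estimate with a strict triangle inequality gives $\varphi(f)=1$ precisely on $\bigcap_n\{\varphi:\pi(\varphi)_n=z_n\}=\M_z$ and $|\varphi(f)|<1$ off $\M_z$. As $\M_z\neq\varnothing$ by surjectivity of $\pi$, this also forces $\|f\|=1$. Writing $\widehat f(\varphi)=\varphi(f)$ for the Gelfand transform on $K=\M(\H^\infty(B_{c_0}))$, we have $\M_z=\{\widehat f=1\}$ with $|\widehat f|<1$ elsewhere, so $\M_z$ is a peak set.

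Finally I would extract a strong boundary point from the peak set $\M_z$. The restriction $\A|_{\M_z}$ is a uniform algebra on the compact set $\M_z$, and I would take $\varphi$ in its (nonempty) Choquet boundary. Because $\M_z=\{\widehat f=1\}$ while $|\widehat f|<1$ elsewhere, every representing measure $\mu$ on $K$ for a point of $\M_z$ is carried by $\M_z$: from $\int \widehat f\,d\mu=1$, together with $|\widehat f|\le 1$ and $\mu\ge0$, one deduces $\widehat f=1$ $\mu$-almost everywhere, so $\mu$ lives on $\{\widehat f=1\}=\M_z$. Hence the representing measures of $\varphi$ relative to $\A$ coincide with those relative to $\A|_{\M_z}$, so $\varphi$ lies in the Choquet boundary of $\A$, i.e. $\varphi$ is a strong boundary point of $K$; since $\varphi\in\M_z$ we obtain $\pi(\varphi)=z$, and thus $\T^{\mathbb N}\subseteq\pi(\mathcal S)$. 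The step I expect to require the most care is this last one: $K$ is not metrizable, so one cannot peak at a single homomorphism with one function, and I would lean on the classical theory (as in Gamelin and Stout) both for the identification of strong boundary points with Choquet boundary points and for the nonemptiness of the Choquet boundary of $\A|_{\M_z}$.
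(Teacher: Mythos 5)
Your proof is correct, but it diverges from the paper's argument in both halves, more substantially in the first. For $\pi(\mathcal S)\subseteq\T^{\N}$ the paper makes no use of Gleason parts at all: it observes $\mathcal S\subset\mathcal{SB}$ (the Shilov boundary) and proves $\pi(\mathcal{SB})\subset\T^{\N}$ directly by exhibiting, for each $a$ with $|a_n|<1$, the weak-star closed set $P_n^{-1}\big(\overline{\D}\setminus\D(a_n,\frac{1-|a_n|}{2})\big)$ as a boundary that misses the fiber over $a$; this is self-contained. Your contrapositive --- strong boundary points are one-point Gleason parts, while by the discussion after Proposition~\ref{R_b} no fiber over a point outside $\T^{\N}$ contains a one-point part --- is also valid, but it leans on the paper's only-sketched extension of Proposition~\ref{R_b} to insertion of a coordinate in an arbitrary position and on the fact that each $R_b$ is onto its target fiber; the paper's route avoids these dependencies. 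For $\T^{\N}\subseteq\pi(\mathcal S)$ your function $f=\sum_n 2^{-n}\cdot\tfrac12(1+\overline{z_n}w_n)$ is, up to the harmless factor $\tfrac12$, exactly the paper's $1+x^*$ with $x^*=\sum_n 2^{-n}\overline{z_n}e_n^*$, and the equality-case-of-the-triangle-inequality analysis identifying $\{\widehat f=\|f\|\}$ with $\M_z$ is the same. The only difference is the last step: the paper quotes \cite[Theorem 7.21]{stout} (a Gelfand transform attains its norm at a strong boundary point) and notes that norm attainment of $\widehat f$ forces $\pi(\varphi)=z$, whereas you re-derive the needed instance through representing measures and the Choquet boundary of the restricted algebra on the peak set $\M_z$. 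Both are classical; yours is longer but makes the mechanism explicit. I see no gaps.
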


\begin{proof}
Denoting by $\mathcal{SB}$ the  Shilov boundary of $\M(\H^\infty(B_{c_0}))$, we have that $\mathcal S\subset \mathcal{SB}$ (see, e.g., \cite[Corollary 7.24]{stout}) and thus $\pi(\mathcal S)\subset \pi(\mathcal{SB})$. Therefore, in order to prove $\pi(\mathcal S)=\T^{\mathbb N}$ it is enough to see $\pi(\mathcal{SB})\subset\T^{\mathbb N}$ and $\T^{\mathbb N}\subset \pi(\mathcal S)$.

To prove the first inclusion, for each $n\in\mathbb N$, let us consider  the map  $j_n\colon \overline B_{\ell_\infty}\to\overline{\mathbb D}$ given by $j_n(z)=z_n$. Then, $P_n=j_n\circ\pi$ is a weak-star continuous mapping from $\M(\H^\infty(B_{c_0}))$ into $\overline{\mathbb D}$.

Given $a\in\overline B_{\ell_\infty}\setminus\T^{\mathbb N}$, we want to show that $a\not\in\pi(\mathcal{SB})$. Since $a\not\in\T^{\mathbb N}$, there is $n$ such that $|a_n|<1$. The set $C_n=\overline{\mathbb D}\setminus\mathbb D(a_n, \frac{1-|a_n|}{2})$ is a closed subset of $\mathbb C$, so $P_n^{-1}(C_n)$ is weak-star closed in $\M(\H^\infty(B_{c_0}))$. Also, since $C_n$ contains spheres of radius $r$, with $r$ approaching to 1, for each $f\in \H^\infty(B_{c_0})$ we should have
$$
\sup_{z\in B_{c_0}}|f(z)| = \sup_{\varphi\in P_n^{-1}(C_n)}|\varphi(f)|.
$$
Hence,  $P_n^{-1}(C_n)$ is a boundary, which implies that $\mathcal{SB}\subset P_n^{-1}(C_n)$. Thus, $\pi(\mathcal{SB})\subset \pi(P_n^{-1}(C_n))$. Since $a\not\in \pi(P_n^{-1}(C_n))$, we obtain that $a\not\in\pi(\mathcal{SB})$.

For the second inclusion, let $a=(a_n)\in\T^{\mathbb N}$ be given by $a_n=e^{i\theta_n}$, for all $n$. As $(\frac{e^{-i\theta_n}}{2^n})\in \ell_1$ its associated function
$$
x^\ast(x)=\sum_{n=1}^\infty \frac{e^{-i\theta_n}}{2^n}x_n
$$
belongs to $c_0^\ast$. Hence $f(x)=1+x^\ast(x)$ is holomorphic on $c_0$, bounded and uniformly continuous when restricted  to $\overline B_{\ell_\infty}$. Observe that
$$
| f(a)|=2;\qquad\textrm{ while }\qquad | f(z)|<2,\ \textrm{ for all }z\in \overline B_{\ell_\infty}, \ z\not= a.
$$ Associating $f$ with its Gelfand transform $\widehat f$ and noting that $\widehat f$ attains its norm at a strong boundary point \cite[Theorem 7.21]{stout}, there is $\varphi\in \mathcal S$ such that $|\widehat f(\varphi)|=|\varphi(f)|=2$.
Finally
$$
\varphi(f)= \varphi(1)+\varphi(x^\ast)=1+x^\ast(\pi(\varphi))=f(\pi(\varphi)).
$$
Therefore, $\pi(\varphi)=a$, and so $a\in\pi(\mathcal S)$.
\end{proof}
\medskip

Up to now our study about the relationships between fibers and Gleason parts gives information about in which fibers there are singleton Gleason parts, which fibers intersect \textit{thick} Gleason parts and which Gleason parts contain elements of different fibers. To  complete this picture we now wonder about how many Gleason parts intersect a particular fiber. Should it always be more than one?

With respect to this question note that we have already seen that in the fiber over any $z\in\T^\N$ there is a singleton Gleason part and also a copy of $B_{\ell_\infty}$. So, at least two Gleason parts are inside each of these fibers. By translations through mappings $R_b$ (as in Proposition \ref{R_b} and the subsequent comment) we also obtain that there are at least two Gleason parts intersecting the fiber over $z$ for each $z\in S_{\ell_\infty}$ with all but finitely many coordinates of modulus 1.
\vspace{1cm}

The following results show that the fiber over any $z\in B_{\ell_\infty}$ intersects  $2^c$ Gleason parts.
First, relying on the proof of~\cite[Theorem~5.1]{cole-gamelin-johnson} (see also \cite[Corollary~5.2]{cole-gamelin-johnson}) we obtain the desired result for the fiber over $0$. For our purposes, we use the construction and notation given in \cite{cole-gamelin-johnson}.
\medskip

\begin{theorem}\label{theo:betaN GP en M0(Hinf BX)}  Let $X$ be an infinite dimensional Banach space. Then  there is  an embedding  $\Psi\colon (\beta(\N)\setminus \N)\times \D \to \M_0$ that is analytic on each slice $\{\theta\}\times\D$ and satisfies:
\begin{enumerate}[\upshape (a)]
\item $\Psi(\theta,\lambda)\not\in \GP(\delta_0)$ for each $(\theta,\lambda)$.

\item $\GP(\Psi(\theta, \lambda))\cap \GP(\Psi(\tilde \theta, \tilde \lambda))=\varnothing$  for each $\theta, \tilde \theta\in \beta(\N)\setminus \N$ with $\theta\ne \tilde \theta$ and any $\lambda, \tilde \lambda\in \D$.
\end{enumerate}
\end{theorem}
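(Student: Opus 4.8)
The plan is to adapt the construction of \cite[Theorem~5.1]{cole-gamelin-johnson} to an arbitrary infinite dimensional $X$, the only extra ingredient being that the normalized weak-star null sequence driving that construction is available in full generality. Concretely, since $X^*$ is infinite dimensional, the Josefson--Nissenzweig theorem furnishes a sequence $(\xi_n)\subset S_{X^{**}}$ with $\xi_n\xrightarrow{\sigma(X^{**},X^*)}0$; that is, $\|\xi_n\|=1$ for all $n$ while $x^*(\xi_n)\to0$ for every $x^*\in X^*$. Fixing such a sequence (together with the companion data and scalars $r_n\nearrow1$ used in \cite{cole-gamelin-johnson}), one obtains for each $n$ an analytic disc $g_n\colon\D\to B_{X^{**}}$ whose defining features are that $\|g_n(\lambda)\|\to1$ as $n\to\infty$ for each fixed $\lambda$, that $x^*(g_n(\lambda))\to0$ for every $x^*\in X^*$, and that $\lambda\mapsto g_n(\lambda)$ retains a nondegenerate dependence on $\lambda$. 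Since each point of $B_{X^{**}}$ yields an evaluation $\delta_{g_n(\lambda)}\in\M(\mathcal H^\infty(B_X))$, I would define
\[
\Psi(\theta,\lambda):=\lim_{n\to\theta}\delta_{g_n(\lambda)},
\]
the weak-star limit along the free ultrafilter $\theta$, which exists and is multiplicative because $\M(\mathcal H^\infty(B_X))$ is weak-star compact and its homomorphisms form a closed set. Since $x^*(g_n(\lambda))\to0$, we get $\Psi(\theta,\lambda)(x^*)=0$ for all $x^*\in X^*$, so indeed $\Psi(\theta,\lambda)\in\M_0$.

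First I would verify analyticity of the slices: for fixed $\theta$ and $f\in\mathcal H^\infty(B_X)$ the functions $\lambda\mapsto\tilde f(g_n(\lambda))$ are holomorphic on $\D$ and uniformly bounded by $\|f\|$, hence normal, so the pointwise ultrafilter limit $\lambda\mapsto\Psi(\theta,\lambda)(f)=\lim_{n\to\theta}\tilde f(g_n(\lambda))$ is holomorphic by Vitali. Thus each $\lambda\mapsto\Psi(\theta,\lambda)$ is an analytic disc into $\M_0$, and by the Proposition on analytic inclusions quoted above its whole image lies in a single Gleason part $P_\theta$. Consequently, for (a) it suffices to place \emph{one} point of each slice outside $\mathcal{GP}(\delta_0)$. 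Here, because $\|g_n(\lambda)\|\to1$, Proposition~\ref{basic 1}(a) gives $\rho(\delta_0,\delta_{g_n(\lambda)})=\|g_n(\lambda)\|\to1$; exhibiting functions $f$ with $f(0)=0$, $\|f\|\le1$ and $|\tilde f(g_n(\lambda))|\to1$ along $\theta$ (the interpolating functions supplied by the CGJ construction) then yields $\rho(\Psi(\theta,\lambda),\delta_0)=1$. Hence $P_\theta\neq\mathcal{GP}(\delta_0)$, and the conclusion of (a) holds for every $\lambda$.

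For part (b), fix $\theta\neq\tilde\theta$ and choose $A\subset\N$ with $A\in\theta$ and $\N\setminus A\in\tilde\theta$. Using the interpolation step again, I would build $f\in\mathcal H^\infty(B_X)$, $\|f\|\le1$, whose canonical extension satisfies $\tilde f(g_n(\lambda))\to0$ for $n\in A$ and $|\tilde f(g_n(\lambda))|\to1$ for $n\notin A$. Then $\Psi(\theta,\lambda)(f)=0$ while $|\Psi(\tilde\theta,\tilde\lambda)(f)|=1$, so $\rho(\Psi(\theta,\lambda),\Psi(\tilde\theta,\tilde\lambda))\geq|\Psi(\tilde\theta,\tilde\lambda)(f)|=1$; since Gleason parts partition $\M(\mathcal H^\infty(B_X))$, this gives $\mathcal{GP}(\Psi(\theta,\lambda))\cap\mathcal{GP}(\Psi(\tilde\theta,\tilde\lambda))=\varnothing$ (equivalently $P_\theta\neq P_{\tilde\theta}$). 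The same separating functions make $\Psi$ injective, and together with the weak-star continuity of $\theta\mapsto\Psi(\theta,\lambda)$ and the analyticity in $\lambda$ they show that $\Psi$ is a homeomorphism onto its image, i.e. an embedding.

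The hard part will be the interpolation at the heart of the CGJ construction: producing, for an arbitrary infinite dimensional $X$, functions in $\mathcal H^\infty(B_X)$ whose canonical extensions read off prescribed boundary data along the sphere-approaching discs $g_n$ (norming data for (a), $A$-separating data for (b)) while simultaneously keeping the discs nondegenerate so that the slices do not collapse in the limit. This is precisely where infinite dimensionality and the Josefson--Nissenzweig sequence are used essentially, and it is the content I would import from \cite[Theorem~5.1]{cole-gamelin-johnson} and \cite[Corollary~5.2]{cole-gamelin-johnson}.
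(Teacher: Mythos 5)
Your architecture coincides with the paper's: take the Cole--Gamelin--Johnson analytic discs $z_k(\lambda)\subset B_{X^{**}}$ (built from a normalized $w(X^{**},X^{*})$-null sequence, which is where infinite dimensionality enters), set $\Psi(\theta,\lambda)=\lim_{\theta}\delta_{z_k(\lambda)}$ as a weak-star ultrafilter limit, invoke the proposition that an analytic disc in $\M(\A)$ lies in a single Gleason part so that (a) and (b) only need to be checked at one point of each slice, and then separate homomorphisms by exhibiting norm-one functions that vanish at one of them and are nearly unimodular at the other. You also correctly recognize that $\|z_k(\lambda)\|\to 1$ alone cannot give $\rho(\Psi(\theta,\lambda),\delta_0)=1$, and that uniformly interpolating functions are required.

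The gap is that those interpolating functions --- especially the ones needed for (b) --- are not ``supplied by the CGJ construction'' in a form you can simply import; building and estimating them is the actual content of the paper's proof. Concretely, the paper fixes the CGJ data $L_j\in X^{*}$, $r_j\nearrow 1$ with $\sum(1-r_j)<\infty$, and $(a_j)\in\ell_1$ satisfying $L_k(z_k)=r_k$, $L_j(z_k)=0$ for $k<j$, and $|L_j(z_k)|<a_j$ for $k>j$. For (a) it uses the tail Blaschke products $f_N=\prod_{j>N}\bigl(r_j-L_j(\cdot)\bigr)/\bigl(1-r_j\,L_j(\cdot)\bigr)$, which vanish at every $z_k$ with $k>N$ simultaneously while $\delta_0(f_N)=\prod_{j>N}r_j\to 1$. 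For (b), with $J\in\theta$ and $\N\setminus J\in\tilde\theta$, it uses the partial products $f_{(J,N)}$ taken only over $j\in J$, $j>N$; the step you gloss over is the lower bound $|f_{(J,N)}(z_k)|\ge(1-\varepsilon)^{2}$ for $k\notin J$ large, which requires splitting the product into the factors with $N<j<k$ (controlled by $|L_j(z_k)|<a_j$ and the convergence of $\prod (r_j-a_j)/(1+r_j a_j)$, which uses $(a_j)\in\ell_1$ and $\sum(1-r_j)<\infty$) and the factors with $j>k$ (each equal to $r_j$). None of this is a quotable statement of CGJ. Note also that your demand of a single $f$ with $|\tilde f(g_n(\lambda))|\to 1$ off $A$ is stronger than needed and than what is obtained: the paper only gets $\ge(1-\varepsilon)^{2}$ for each large $N$ and concludes $\rho=1$ by taking the supremum over the whole family $\{f_{(J,N)}\}_N$. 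Once these estimates are supplied, your argument closes.
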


\begin{proof} The existence of the  analytic embedding  $\Psi\colon (\beta(\N)\setminus \N)\times \D \to \M_0$ is given in~\cite[Theorem~5.1]{cole-gamelin-johnson}. Below, we summarize the main ingredients used in its construction.
\begin{itemize}
\item There exists a sequence $(z_k)\subset B_{X^{**}}$  such that $\|z_k\| < \|z_{k+1}\| $ and $\|z_k\|$ is convergent to 1.

\item The sequence of norms $(\|z_k\|)$ increases so rapidly that there exists an increasing sequence $(r_k)$, such that $0<r_k <\|z_k\|$ and $\sum (1-r_k)$ is finite.

\item For a fixed sequence $(a_k)$ so that $0<a_k<1$ and $(a_k)\in \ell_1$, there exists $(L_k)\subset X^*$ such that  $\|L_k\|<1$ and
\begin{itemize}
\item[$\cdot$]  $L_k(z_k)=r_k$,\quad for all $k$,
\item[$\cdot$] $L_j(z_k)=0$, \quad $1<k<j$,
\item[$\cdot$] $|L_j(z_k)| < a_j$,\quad for all $k>j$.
\end{itemize}

\item There exists $0<r<1$ such that for all $k$, if $w_k\colon \D\to X$ is defined as $w_k(\lambda)=\big(\frac{r_k-\lambda}{1-r_k\,\lambda}\big)\frac{z_k}{r_k}$, then $\|w_k(\lambda)\|<1$ for all $|\lambda| < r$.

\item The Blaschke product $G\colon  B_{X^{**}}\to \mathbb C$, given by $G(z)=\prod_{j=1}^\infty \frac{r_j-L_j(z)}{1-r_j\,L_j(z)}$  belongs to $\H^\infty( B_{X^{**}})$ and $|G(z)| <1$ if $\|z\|<1$.

\item For $|\lambda|< r/2$ and each $k$ there exists a unique $\xi_k(\lambda)$ such that $|\xi_k(\lambda)|<r$ and $G(w_k(\xi_k(\lambda)))=\lambda$ for all  $|\lambda|< r/2$.

\item For every $k$ the function $z_k(\lambda)\colon =w_k(\xi_k(\lambda))$ for $|\lambda|<r/2$  is a multiple of $z_k$,  depends analytically on $\lambda$ and satisfies  $\|z_k(\lambda)\|<1$ if $|\lambda|<r/2$ with $z_k(0)=z_k$.
\end{itemize}

Note that replacing $\D$ by $D=\{\lambda \in\C\ |\ \ |\lambda|<r/2 \}$, it is enough to show the result for $\beta(\N)\setminus \N\times D$. The function $\Psi\colon \N\times D\to \M$  defined by $\Psi(k, \lambda)=\delta_{z_k(\lambda)}$ extends to a map  $\Psi\colon \beta(\N)\times D\to \M$ which is continuous on $\beta(\N)$ for each fixed $\lambda$. Moreover, by \cite[Theorem~5.1]{cole-gamelin-johnson}, we know that $\Psi(\beta(\N)\setminus \N\times D)$ lies in the fiber over $0$, $\M_0$.
\bigskip

Now, let us prove that (a) holds. As $\Psi$ is analytic on each slice,  to show that $\Psi(\theta,\lambda)\not\in \GP(\delta_0)$ for each $(\theta,\lambda)$ it is enough to see that $\Psi(\theta, 0)\not\in \GP(\delta_0)$,  for any $\theta$.
Given $N\in\N$, consider $f_N\in \H^\infty( B_{X^{**}})$ defined by
$$
f_N(z)\colon= \prod_{j>N}^\infty \frac{r_j-L_j(z)}{1-r_j\,L_j(z)}.
$$
Then, $\delta_0(f_N)=\prod_{j>N} r_j \to 1$ as $N\to\infty$. On the other hand, as $\Psi(k,0)=\delta_{z_k}$, for $k>N$,
$$
\Psi(k,0)(f_N)=\prod_{j>N}^\infty \frac{r_j-L_j(z_k)}{1-r_j\,L_j(z_k)}=0.
$$
Now, take $\theta\in  \beta(\N)\setminus \N$.  Then, there is a net $(j(\alpha))\subset \mathbb{N}$, such that $\theta=\lim_\alpha j(\alpha)$. Thus,
$$
\Psi(\theta, 0)(f_N)=\lim_\alpha \Psi(j(\alpha),0)(f_N)=0.
$$
Therefore,
$$
\rho(\delta_0, \Psi(\theta, 0)) \ge \sup_N \{|\delta_0(f_N)|\} =\sup_N \{\, \textstyle{\prod}_{j>N} r_j\, \}=1,
$$
which shows that $\Psi(\theta, 0)\not\in \GP(\delta_0)$. \\

To prove (b) let us see that if $\theta\ne \tilde \theta$ then $\GP(\Psi(\theta, D))\cap \GP(\Psi(\tilde \theta, D))=\varnothing$. Indeed, for  $\theta\ne \tilde \theta$ there exists an infinite set $J\subset \N$ such that $\N\setminus J$ is also infinite and  $\theta\in \overline{\{j\colon j\in J\}}$, $\tilde\theta \in \overline{\{j\colon j\in \N\setminus J\}}$.

Here, for $N\in \N$ consider $f_{(J,N)}\in \H^\infty( B_{X^{**}})$ given by
$$
f_{(J,N)}(z)\colon= \prod_{\underset {j> N}{j \in J}}\, \frac{r_j-L_j(z)}{1-r_j\,L_j(z)}.
$$
Then, $\|f_{(J,N)}\|\le 1$ and  $f_{(J,N)}(z_k)=0$ for all $k\in J, k>N$. Hence, as before, we obtain that $\Psi(\theta,0)(f_{(J,N)})=0$.

On the other hand, $\tilde\theta =\lim_{\tilde \alpha} k(\tilde\alpha)$. For these indexes $k(\tilde\alpha)\not\in J$ with $k(\tilde\alpha)>N$, the corresponding factor does not appear in $f_{(J,N)}$ and
$$
\Psi(k(\tilde\alpha),0)(f_{(J,N)})=
\prod_{\underset {N<j<k(\tilde\alpha)}{j \in J}}\frac{r_j-L_j(z_{k(\tilde\alpha)})}{1-r_j\,L_j(z_{k(\tilde\alpha)})} \cdot  \prod_{\underset {j> k(\tilde\alpha)}{j \in J}} r_j.
$$

Notice that $\Big| \frac{r_j-L_j(z_{ k(\tilde\alpha)})}{1-r_jL_j(z_{k(\tilde\alpha)})}\Big| >\frac{r_j-a_j}{1+r_ja_j}$, for $k(\alpha)>j$.
Since  $1 -\frac{r_j-a_j}{1+ r_j\,a_j } < (1-r_j) + 2a_j$, the series
$\sum_{j\ge 1} (1 -\frac{r_j-a_j}{1+ r_j\,a_j})$ converges, implying that  the infinite product $\prod_{j\ge 1} \frac{r_j-a_j}{1+ r_j\,a_j}$ is convergent  as well as the infinite product over $\{j\in J\}$.

Now, given $0<\varepsilon<1$ we can find $k_0\in \mathbb N$ such that for all $k\ge k_0$,
$$
\prod_{\underset{j>k}{j \in J}} r_j>1-\varepsilon \qquad\textrm{ and }\qquad  \prod_{\underset{j>k}{j \in J}} \frac{r_j-a_j}{1+ r_j\,a_j} >1-\varepsilon.
$$

Then,  for $N>k_0$ and $\tilde\alpha$ such that $ k(\tilde\alpha)>k_0$,
we have
$$
\prod_ {\underset{N< j <  k(\tilde\alpha)}{j \in J}} \Big|\frac{r_j-L_j(z_{k(\tilde\alpha)})}{1-r_jL_j(z_{k(\tilde\alpha)})}\Big| >
\prod_ {\underset{N< j <  k(\tilde\alpha)}{j \in J}} \frac{r_j-a_j}{1+r_ja_j}  >  \prod_ {\underset{j>N}{j \in J}} \frac{r_j-a_j}{1+r_ja_j}  > 1 -\varepsilon.
$$
Hence,
$$
|\Psi(k(\tilde\alpha),0)(f_{(J,N)})| > (1 -\varepsilon)^2,
$$
and $|\Psi(\tilde\theta,0)(f_{(J,N)})| \ge (1 -\varepsilon)^2$.
Finally, for any $0<\varepsilon<1$
$$
\rho(\Psi(\theta,0)), \Psi(\tilde\theta,0)) \ge \sup_N \{| \Psi(\tilde\theta,0)(f_{(J,N)})|\}  \ge (1 -\varepsilon)^2,
$$
and the result follows.
\end{proof}

Next, we will see that there is  a bijective biholomorphic mapping from $B_{\ell_\infty}$ into $B_{\ell_\infty}$ which is an isometry for the Gleason metric and transfers each fiber over an interior point to a different fiber. We use this fact to extend the conclusions in Theorem~\ref{theo:betaN GP en M0(Hinf BX)} to the fiber $\M_z(\H^\infty(B_{c_0}))$ for any $z\in B_{\ell_\infty}$.

\begin{lemma}\label{ControlandoMoebius}
Let $\alpha\in\D$ and let $\eta_\alpha\colon \D\to\D$ be the Moebius transformation,
\[
\eta_\alpha(\lambda)=\frac{\alpha-\lambda}{1-\bar{\alpha}\lambda}.
\]
Given $|\alpha|\leq  s<1$, for any  $\lambda\in \D$ with $|\lambda|\leq s$ the following inequality holds:
\[
|\eta_\alpha(\lambda)|\leq \frac{2s}{1+s^2}.
\]
\end{lemma}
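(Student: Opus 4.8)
The plan is to reduce this two-variable estimate to a one-variable monotonicity argument, after first recognizing that $|\eta_\alpha(\lambda)|$ is precisely the pseudo-hyperbolic distance $\rho(\delta_\alpha,\delta_\lambda)$ on $\D$. The first thing to notice is that the crude triangle-inequality bound, $|\eta_\alpha(\lambda)|\le(|\alpha|+|\lambda|)/(1-|\alpha||\lambda|)\le 2s/(1-s^2)$, is \emph{too weak}: the target constant $2s/(1+s^2)$ is strictly smaller. So the real content is to produce the sharper estimate $|\eta_\alpha(\lambda)|\le(|\alpha|+|\lambda|)/(1+|\alpha||\lambda|)$, which is the M\"obius-invariant form of the triangle inequality passing through the origin.

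To obtain this sharper bound directly, I would set $a=|\alpha|$, $b=|\lambda|$ and $t=\mathrm{Re}(\bar\alpha\lambda)$, so that $-ab\le t\le ab$, and write
\[
|\eta_\alpha(\lambda)|^2=\frac{a^2-2t+b^2}{1-2t+a^2b^2}=:F(t).
\]
Differentiating, the numerator of $F'(t)$ collapses, after the $t$-terms cancel, to the clean expression $-2(1-a^2)(1-b^2)<0$, so that $F$ is decreasing on $[-ab,ab]$ and therefore attains its maximum at the endpoint $t=-ab$. There $F(-ab)=(a+b)^2/(1+ab)^2$, which yields $|\eta_\alpha(\lambda)|\le(a+b)/(1+ab)$.

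It then remains to maximize $g(a,b)=(a+b)/(1+ab)$ over the square $0\le a,b\le s$. Since $\partial g/\partial a=(1-b^2)/(1+ab)^2>0$ and symmetrically in $b$, the function $g$ is increasing in each variable, so its maximum on the square occurs at $a=b=s$, giving exactly $2s/(1+s^2)$. Sharpness of the constant (and thus a check that nothing has been lost) is confirmed by the configuration $\alpha=s$, $\lambda=-s$, for which $\eta_\alpha(\lambda)=2s/(1+s^2)$ on the nose.

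The only mildly delicate point is the monotonicity of $F$: one must resist using the naive triangle inequality and instead exploit the precise algebraic cancellation $F'(t)\propto-(1-a^2)(1-b^2)$, which forces the extremum to the antipodal configuration $t=-ab$. Everything else is a routine monotonicity check requiring nothing beyond elementary calculus. Alternatively, I could simply invoke the standard pseudo-hyperbolic triangle inequality to get $g(a,b)=(a+b)/(1+ab)$ at once and skip the computation of $F$ entirely, reducing the argument to the final monotonicity step.
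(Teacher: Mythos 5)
Your proof is correct, but it takes a different route from the paper's. You first establish the sharper pointwise bound $|\eta_\alpha(\lambda)|\le(|\alpha|+|\lambda|)/(1+|\alpha||\lambda|)$ by writing $|\eta_\alpha(\lambda)|^2$ as a function $F(t)$ of $t=\mathrm{Re}(\bar\alpha\lambda)$ and checking $F'(t)\propto-(1-a^2)(1-b^2)<0$, and then you maximize $(a+b)/(1+ab)$ over the square $0\le a,b\le s$. The paper instead uses the standard identity
\[
1-|\eta_\alpha(\lambda)|^2=\frac{(1-|\alpha|^2)(1-|\lambda|^2)}{|1-\bar{\alpha}\lambda|^2},
\]
together with the crude bounds $(1-|\alpha|^2)(1-|\lambda|^2)\ge(1-s^2)^2$ and $|1-\bar{\alpha}\lambda|\le 1+|\alpha||\lambda|\le 1+s^2$, and then observes that $\sqrt{1-\bigl(\tfrac{1-s^2}{1+s^2}\bigr)^2}=\tfrac{2s}{1+s^2}$; this is a two-line, calculus-free computation. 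The quantity $(1-a^2)(1-b^2)$ that drives your monotonicity argument is exactly the numerator of the paper's identity, so the two arguments are close cousins, but yours buys the stronger intermediate estimate $(|\alpha|+|\lambda|)/(1+|\alpha||\lambda|)$ (the pseudo-hyperbolic triangle inequality through the origin) and identifies the extremal antipodal configuration $\alpha=s$, $\lambda=-s$, confirming sharpness, at the cost of two extra optimization steps that the paper's direct inequality chain avoids.
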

\begin{proof} Notice that
$$
1-\Big|\frac{\alpha -\lambda}{1-\bar{\alpha} \lambda}\Big|^2 =\frac{|1-\bar{\alpha}\lambda|^2-|\alpha -\lambda|^2}{|1-\bar{\alpha}\lambda|^2} = \frac{(1-|\lambda|^2)(1-|\alpha|^2)}{|1-\bar{\alpha}\lambda|^2}.
$$
Hence, the result follows for any $|\lambda|\leq s$ since
$$  1- \Big|\frac{\alpha -\lambda}{1-\bar{\alpha}\lambda}\Big|^2\geq  \Big(\frac{1-s^2}{1+s^2}\Big)^2\ \textrm{ and }\  \sqrt{1- \Big(\frac{1-s^2}{1+s^2}\Big)^2}=\frac{2s}{1+s^2}.
$$
\end{proof}
\bigskip

\begin{proposition}\label{Phi_a} Fix $a=(a_n)\in  B_{\ell_\infty}$. The mapping $\Phi_a \colon B_{\ell_\infty} \to B_{\ell_\infty}$, defined by
$$
\Phi_a(z)=(\eta_{a_n}(z_n))
$$
is bijective and biholomorphic. Moreover, for any $x^*\in\ell_1$, the function $x^*\circ \Phi_a$ is uniformly continuous.
\end{proposition}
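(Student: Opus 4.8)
The plan is to establish the four assertions in turn: that $\Phi_a$ maps $B_{\ell_\infty}$ into itself, that it is a bijection, that it is biholomorphic, and finally that $x^*\circ\Phi_a$ is uniformly continuous for $x^*\in\ell_1$. Throughout set $s=\|a\|_\infty<1$. For the first point, fix $z\in B_{\ell_\infty}$ and put $\sigma=\max\{s,\|z\|_\infty\}<1$; since $|a_n|\le\sigma$ and $|z_n|\le\sigma$ for every $n$, Lemma~\ref{ControlandoMoebius} gives $|\eta_{a_n}(z_n)|\le \tfrac{2\sigma}{1+\sigma^2}<1$, whence $\|\Phi_a(z)\|_\infty\le \tfrac{2\sigma}{1+\sigma^2}<1$ and $\Phi_a(z)\in B_{\ell_\infty}$. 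For bijectivity I would use that each $\eta_\alpha$ is an involution of $\D$ (a direct computation gives $\eta_\alpha\circ\eta_\alpha=\mathrm{id}_{\D}$), so that $\Phi_a\circ\Phi_a=\mathrm{id}$ on $B_{\ell_\infty}$; thus $\Phi_a$ is a bijection which is its own inverse.

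The main obstacle is the holomorphy of $\Phi_a$ as an $\ell_\infty$-valued map, because coordinatewise holomorphy together with local boundedness does not by itself yield Banach-valued holomorphy without a norming-family argument. I would prove Fr\'echet differentiability directly, exploiting that the bound $|\eta_{a_n}|\le 1$ on $\D$ holds uniformly in $n$. Fix $z$ with $\|z\|_\infty=t_0<1$, pick $t\in(t_0,1)$, and take as candidate differential the diagonal operator $D\Phi_a(z)h=(\eta_{a_n}'(z_n)h_n)_n$; this is bounded and complex-linear since $|\eta_{a_n}'(z_n)|=\tfrac{1-|a_n|^2}{|1-\overline{a_n}z_n|^2}\le (1-st_0)^{-2}$. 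For $\|h\|_\infty$ small, Taylor-expanding each $\eta_{a_n}$ about $z_n$ on the disc $\D(z_n,1-t_0)\subset\D$ and applying Cauchy's estimates with radius $1-t$ bounds the coefficients by $(1-t)^{-k}$, uniformly in $n$; summing the second-order tail gives $|\eta_{a_n}(z_n+h_n)-\eta_{a_n}(z_n)-\eta_{a_n}'(z_n)h_n|\le \tfrac{2}{(1-t)^2}|h_n|^2$. Taking the supremum over $n$ yields a remainder that is $O(\|h\|_\infty^2)$, so $\Phi_a$ is Fr\'echet differentiable at $z$ with complex-linear derivative, hence holomorphic. Biholomorphy is then immediate: the inverse is $\Phi_a$ itself, which is holomorphic.

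For the last assertion, write $x^*=(c_n)\in\ell_1$, so that $x^*\circ\Phi_a(z)=\sum_n c_n\,\eta_{a_n}(z_n)$. The point is that each $\eta_{a_n}$ extends to a map of $\overline{\D}$ with $|\eta_{a_n}'(\lambda)|=\tfrac{1-|a_n|^2}{|1-\overline{a_n}\lambda|^2}\le \tfrac{1+|a_n|}{1-|a_n|}\le \tfrac{1+s}{1-s}$ for $\lambda\in\overline{\D}$, so the family $(\eta_{a_n})_n$ is equi-Lipschitz on $\overline{\D}$ with a constant independent of $n$. Consequently
\[
|x^*\circ\Phi_a(z)-x^*\circ\Phi_a(w)|\le \sum_n|c_n|\,|\eta_{a_n}(z_n)-\eta_{a_n}(w_n)|\le \frac{1+s}{1-s}\,\|x^*\|_1\,\|z-w\|_\infty,
\]
which is even a global Lipschitz estimate and in particular gives uniform continuity. (Should one prefer not to invoke the uniform Lipschitz bound, the same conclusion follows by splitting the sum: the tail $\sum_{n>N}|c_n|<\varepsilon$ is controlled using $|\eta_{a_n}|\le 1$, while the finite head is handled by the continuity of the finitely many $\eta_{a_1},\dots,\eta_{a_N}$ on $\overline{\D}$.) I would note that this uniform Lipschitz bound in fact shows $\Phi_a$ itself is Lipschitz on $B_{\ell_\infty}$, so the hypothesis $x^*\in\ell_1$ is stronger than strictly needed here; it is stated this way because $\ell_1=c_0^*$ is the dual relevant to the subsequent application.
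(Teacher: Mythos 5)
Your proof is correct, and the first, second, and last steps (the containment $\Phi_a(B_{\ell_\infty})\subset B_{\ell_\infty}$ via Lemma~\ref{ControlandoMoebius}, bijectivity via $\Phi_a\circ\Phi_a=\mathrm{id}$, and uniform continuity of $x^*\circ\Phi_a$) match the paper in substance. Where you genuinely diverge is the holomorphy of $\Phi_a$: you establish Fr\'echet differentiability directly, exhibiting the diagonal candidate derivative $h\mapsto(\eta_{a_n}'(z_n)h_n)_n$ and controlling the second-order Taylor tail of each $\eta_{a_n}$ uniformly in $n$ by Cauchy estimates on discs $\D(z_n,1-t)\subset\D$ where $|\eta_{a_n}|\le 1$. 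The paper instead invokes Dunford's theorem: it suffices to check weak-star holomorphy against the predual $\ell_1$, i.e.\ that each $x^*\circ\Phi_a(z)=\sum_n b_n\eta_{a_n}(z_n)$ is holomorphic, which follows from the Weierstrass $M$-test since $|b_n\eta_{a_n}(z_n)|\le|b_n|$; that single argument simultaneously delivers $x^*\circ\Phi_a\in\A_u(B_{\ell_\infty})$, so the uniform continuity assertion comes for free (your parenthetical ``split into head and tail'' alternative is essentially this). Your route is more elementary and self-contained --- it avoids the norming-family/Dunford machinery whose applicability you rightly flag as a subtlety --- at the cost of a separate (and slightly stronger) argument for uniform continuity, namely the equi-Lipschitz bound $|\eta_{a_n}'|\le\frac{1+s}{1-s}$ on $\overline{\D}$, which also yields the correct bonus observation that $\Phi_a$ itself is Lipschitz. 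Both proofs are complete; yours additionally produces an explicit formula and norm bound for $D\Phi_a(z)$, which the paper does not need.
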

\begin{proof}
  First, let us check that  $\Phi_a(B_{\ell_\infty})\subset B_{\ell_\infty}$. Fix $z=(z_n)\in B_{\ell_\infty}$ and  take $s=\max\{\|a\|,\|z\|\}  <1$.
Using Lemma~\ref{ControlandoMoebius} we obtain
\[
\|\Phi_a(z)\|=\sup_{n} |\eta_{a_n}(z_n)|\leq \frac{2s}{1+s^2}<1.
\]
To check that $\Phi_a$ is holomorphic, by Dunford's theorem it is enough to check that $\Phi_a$ is weak-star holomorphic, i.e. that $x^*\circ \Phi_a\in \H(B_{\ell_\infty})$ for every $x^*=(b_n)\in \ell_1$. Notice that
$x^*\circ \Phi_a(z)=\sum_{n=1}^{\infty}b_n\eta_{a_n}(z_n)$, and
\[
|b_n \eta_{a_n}(z_n)|\leq |b_n|,
\]
for every $z\in  B_{\ell_\infty}$ and every $n$. By the Weierstrass $M$-test, the series $\sum_{n=1}^{\infty}b_n\eta_{a_n}(z_n)$ converges absolutely and uniformly on $\overline{B}_{\ell_\infty}$ and as each $z\mapsto \eta_{a_n}(z_n)$ belongs to $\A_u(B_{\ell_\infty})$ we have actually proved that $x^*\circ \Phi_a\in  \A_u(B_{\ell_\infty})$, for every $x^*\in \ell_1$. Thus $\Phi_a\in \H(B_{\ell_\infty},B_{\ell_\infty})$.

Finally as $\Phi_a\circ \Phi_{a}(z)=z$ for every $z\in  B_{\ell_\infty}$, we obtain that $\Phi_a$ has inverse $\Phi_a^{-1}=\Phi_{a}$ and $\Phi_a$ is biholomorphic.
\end{proof}
\medskip

\begin{remark}
 Observe that if we consider $a\in B_{c_0}$ and we restrict $\Phi_a$ to $z\in  B_{c_0}$, then we obtain the biholomorphic mapping of Example \ref{ex: c0}.
\end{remark}

Given  $a\in B_{\ell_\infty }$ the  restriction of  $\Phi_a$ to $ B_{c_0}$  will be denoted by ${\Phi_a}\big|_{c_0}$.

\begin{theorem}
Given $a\in B_{\ell_\infty}$, the mapping $C_{\Phi_a}\colon \H^\infty( B_{c_0})\to \H^\infty( B_{c_0})$ defined by
\[C_{\Phi_a}(f)=\tilde{f}\circ {\Phi_a}\big|_{c_0},
\]
where $\tilde{f}\colon B_{\ell_\infty}\to \C$ is the canonical extension of each $f\in \H^\infty(B_{c_0})$, is  an isometric isomorphism of Banach algebras.

Moreover, $\Lambda_{\Phi_a}:= C_{\Phi_a}^t|_{\mathcal{M}(\H^\infty( B_{c_0}))}\colon \mathcal{M}(\H^\infty( B_{c_0}))\to   \mathcal{M}(\H^\infty( B_{c_0}))$,
the restriction of its transpose to $\mathcal{M}(\H^\infty( B_{c_0}))$, is a surjective isometry for the Gleason metric with inverse $\Lambda_{\Phi_a}^{-1}=\Lambda_{\Phi_a}$ that satisfies
\[
\Lambda_{\Phi_a}(\M_z)= \mathcal{M}_{\Phi_a(z)},
\]
for every $z\in B_{\ell_\infty}$.
\end{theorem}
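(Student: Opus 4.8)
The plan is to realise $C_{\Phi_a}$ as the composition $R\circ\mathcal C\circ E$, where $E\colon f\mapsto\tilde f$ is the Davie--Gamelin canonical extension into $\H^\infty(B_{\ell_\infty})$ (a norm-preserving algebra homomorphism), $\mathcal C\colon F\mapsto F\circ\Phi_a$ is composition with the biholomorphic self-map $\Phi_a$ of $B_{\ell_\infty}$ from Proposition~\ref{Phi_a}, and $R$ is restriction to $B_{c_0}$. Since $\Phi_a$ is a bijection of $B_{\ell_\infty}$ onto itself, $\mathcal C$ is an isometric automorphism of $\H^\infty(B_{\ell_\infty})$, so $C_{\Phi_a}=R\circ\mathcal C\circ E$ is at once well defined and a contractive unital algebra homomorphism of $\H^\infty(B_{c_0})$. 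Everything else hinges on the single commutation identity
$$
\widetilde{C_{\Phi_a}(f)}=\tilde f\circ\Phi_a\qquad\text{on }B_{\ell_\infty},
$$
that is, that the range of $E$ is invariant under $\mathcal C$. Granting this, $\Phi_a\circ\Phi_a=\mathrm{Id}$ (Proposition~\ref{Phi_a}) gives $C_{\Phi_a}\circ C_{\Phi_a}=\mathrm{Id}$, so $C_{\Phi_a}$ is a bijective unital homomorphism; being contractive with contractive inverse, it is an isometry (equivalently, a bijective homomorphism of a uniform algebra preserves spectral radii, hence norms).

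The commutation identity is the main obstacle, and I would establish it in two stages. For $f\in\A_u(B_{c_0})$ it is soft: $\tilde f$ is weak-star continuous on $\overline B_{\ell_\infty}$, and each map $z\mapsto\sum_n x^*_n\,\eta_{a_n}(z_n)=x^*\circ\Phi_a(z)$ is a uniform limit of weak-star continuous functions, so $\Phi_a$ is weak-star--to--weak-star continuous on $B_{\ell_\infty}$; hence $\tilde f\circ\Phi_a$ is a weak-star continuous extension of $C_{\Phi_a}(f)$, and by Goldstine density of $B_{c_0}$ in $B_{\ell_\infty}$ it coincides with the unique weak-star continuous canonical extension. For general $f\in\H^\infty(B_{c_0})$ I would approximate by the dilations $f_\rho(z)=f(\rho z)$, $0<\rho<1$, which lie in $\A_u(B_{c_0})$ and satisfy $\widetilde{f_\rho}(w)=\tilde f(\rho w)$. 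The $\A_u$ case then gives $\widetilde{C_{\Phi_a}(f_\rho)}(w)=\tilde f(\rho\,\Phi_a(w))$, whose right-hand side tends to $\tilde f(\Phi_a(w))$ as $\rho\to1$. To pass the extension through the limit I would compare homogeneous Taylor parts: writing $C_{\Phi_a}(f)=\sum_m P_m$ and $C_{\Phi_a}(f_\rho)=\sum_m P_m^{(\rho)}$, the uniform Lipschitz bound on $\Phi_a$ (Lemma~\ref{ControlandoMoebius}) makes $C_{\Phi_a}(f_\rho)\to C_{\Phi_a}(f)$ uniformly on each ball $\overline B_{c_0}(0,t)$, whence Cauchy estimates yield $P_m^{(\rho)}\to P_m$ in norm for every $m$. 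Since the canonical extension is isometric on each space of homogeneous polynomials and the extended series is dominated by $\sum_m\|f\|\,\|w\|^m$ uniformly in $\rho$, a dominated-convergence argument for the series gives $\widetilde{C_{\Phi_a}(f_\rho)}(w)\to\widetilde{C_{\Phi_a}(f)}(w)$; identifying the two limits proves the identity.

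With $C_{\Phi_a}$ an isometric automorphism of the algebra, the passage to the spectrum parallels Proposition~\ref{Gleason-isometry}(i). The map $\Lambda_{\Phi_a}(\varphi)=\varphi\circ C_{\Phi_a}$ carries the nonzero homomorphism $\varphi$ to a nonzero homomorphism, hence $\Lambda_{\Phi_a}\colon\M(\H^\infty(B_{c_0}))\to\M(\H^\infty(B_{c_0}))$; from $C_{\Phi_a}\circ C_{\Phi_a}=\mathrm{Id}$ we get $\Lambda_{\Phi_a}\circ\Lambda_{\Phi_a}=\mathrm{Id}$, so $\Lambda_{\Phi_a}$ is a bijection equal to its own inverse. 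As $C_{\Phi_a}$ maps the unit ball of $\H^\infty(B_{c_0})$ onto itself, $\|\Lambda_{\Phi_a}(\varphi)-\Lambda_{\Phi_a}(\psi)\|=\sup_{\|f\|\le1}|\varphi(C_{\Phi_a}f)-\psi(C_{\Phi_a}f)|=\|\varphi-\psi\|$, so $\Lambda_{\Phi_a}$ is a surjective Gleason isometry.

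Finally, for the fiber identity I would test on coordinate functionals. For $z\in B_{\ell_\infty}$ and $\varphi\in\M_z$, the restriction $\varphi|_{\A_u(B_{c_0})}$ lies in $\M(\A_u(B_{c_0}))=\{\delta_w:w\in\overline B_{\ell_\infty}\}$ and has $\pi$-image $z$, hence equals $\delta_z$. For $x^*\in\ell_1\subset\A_u(B_{c_0})$ one has $C_{\Phi_a}(x^*)=\widetilde{x^*}\circ\Phi_a\big|_{c_0}\in\A_u(B_{c_0})$ by Proposition~\ref{Phi_a}, so the $\A_u$ commutation case gives
$$
\Lambda_{\Phi_a}(\varphi)(x^*)=\varphi\big(C_{\Phi_a}(x^*)\big)=\widetilde{C_{\Phi_a}(x^*)}(z)=\big(\widetilde{x^*}\circ\Phi_a\big)(z)=\langle x^*,\Phi_a(z)\rangle.
$$
Thus $\pi(\Lambda_{\Phi_a}(\varphi))=\Phi_a(z)$ and $\Lambda_{\Phi_a}(\M_z)\subset\M_{\Phi_a(z)}$. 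Applying this inclusion with $z$ replaced by $\Phi_a(z)$ and using $\Lambda_{\Phi_a}^2=\mathrm{Id}$ together with $\Phi_a^2=\mathrm{Id}$ yields the reverse inclusion, so $\Lambda_{\Phi_a}(\M_z)=\M_{\Phi_a(z)}$ for every $z\in B_{\ell_\infty}$.
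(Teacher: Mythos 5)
Your argument is correct, and the overall skeleton (everything reduces to the commutation identity $\widetilde{\tilde f\circ\Phi_a|_{c_0}}=\tilde f\circ\Phi_a$, from which the involution $C_{\Phi_a}\circ C_{\Phi_a}=\mathrm{Id}$, the isometry, and the transfer to the spectrum all follow) is the same as the paper's. Where you genuinely diverge is in how that identity is proved. The paper gets it in one stroke: since $\ell_\infty=C(\beta\N)$ is symmetrically regular and $\Phi_a$ maps each ball $\{\|z\|\le s\}$ into a strictly smaller ball (Lemma~\ref{ControlandoMoebius}), the composition theorem for canonical extensions from \cite[Corollary~2.2]{choi-garcia-kim-maestre} yields $\widetilde{\tilde f\circ\Phi_a|_{c_0}}=\tilde f\circ\widetilde{\Phi_a|_{c_0}}$, and then $\widetilde{\Phi_a|_{c_0}}=\Phi_a$ is identified via $w(\ell_\infty,\ell_1)$-uniform continuity. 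You instead give a two-stage, essentially self-contained argument: for $f\in\A_u(B_{c_0})$ you use weak-star continuity of both $\tilde f$ and $\Phi_a$ together with Goldstine density (which is fine, since the canonical extension of an $\A_u$ function is its unique weak-star continuous extension), and for general $f\in\H^\infty(B_{c_0})$ you approximate by dilations $f_\rho$ and pass to the limit through the homogeneous Taylor expansion, using the Cauchy estimates, the isometric nature of the extension on homogeneous polynomials, and a Tannery/dominated-convergence argument for the series. Your route avoids both symmetric regularity and the external citation at the cost of a longer limiting argument; the paper's route is shorter but leans on the quoted composition result. Your treatment of the fiber identity via $\M(\A_u(B_{c_0}))\cong\overline B_{\ell_\infty}$ is an equivalent shortcut for the paper's reduction to finite-type polynomials as in Proposition~\ref{Gleason-isometry}. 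Both proofs are sound.
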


\begin{proof}
Clearly $C_{\Phi_a}$ is well-defined,  $\|C_{\Phi_a}\|\leq 1$ and it is an algebra homomorphism.
Next we claim that given $f\in \H^\infty( B_{c_0})$,
\begin{equation}\label{regularity}
\widetilde{\tilde{f}\circ  {\Phi_a}\big|_{c_0}}=\tilde{f}\circ \Phi_a.
\end{equation}
Let us observe that  $\ell_\infty=C(\beta\N)$  is a symmetrically regular space. Moreover, by Lemma \ref{ControlandoMoebius}, if $0<s<1$, then $m=\sup_{\|z\|\leq s}\|\Phi_a(z)\|<1$. With this in mind, by the method of proof of  \cite[Corollary 2.2]{choi-garcia-kim-maestre}, we have

\begin{equation*}
\widetilde{\tilde{f}\circ  {\Phi_a}\big|_{c_0}}=\tilde{\tilde{f}}\circ \widetilde{ {\Phi_a}\big|_{c_0}}=  \tilde{f}\circ \widetilde{ {\Phi_a}\big|_{c_0}} .
\end{equation*}
By Proposition \ref{Phi_a}, $ {\Phi_a}\big|_{c_0}$ is $w(c_0,\ell_1)$-uniformly continuous on $B_{c_0}$. Hence it has a unique extension to $B_{\ell_\infty}$ that is $w(\ell_\infty,\ell_1)$-uniformly continuous on $B_{\ell_\infty}$  and  it coincides with its canonical extension $\widetilde{ {\Phi_a}\big|_{c_0}}$. On the other hand, also by Proposition \ref{Phi_a}, $\Phi_a$ is $w(\ell_\infty,\ell_1)$-uniformly continuous on $B_{\ell_\infty}$ and it is obviously an extension of $ {\Phi_a}\big|_{c_0}$ to $B_{\ell_\infty}$. Thus, $\widetilde{ {\Phi_a}\big|_{c_0}}(z)=\Phi_a(z)$, for all $z\in B_{\ell_\infty}$.

From this equality we derive that $C_{\Phi_a}\circ C_{\Phi_a}(f)=f$ for every $f \in \H^\infty( B_{c_0})$. Indeed,
$$
C_{\Phi_a}\big( C_{\Phi_a}(f)\big)(z)=\left(\widetilde{\tilde{f}\circ {\Phi_a}\big|_{c_0}}\circ  \Phi_{a}{\big|_{c_0}}\right)(z)={\tilde{f}}\circ \widetilde{ {\Phi_a}\big|_{c_0}} \circ \Phi_a(z)={\tilde{f}}(z)=f(z),
$$
for every $z\in B_{c_0}$.  As a consequence $C_{\Phi_a}$ is an isomorphism of algebras. Also we have $\|f\|\leq \|C_{\Phi_a}\|\| C_{\Phi_a}(f)\|\leq \| C_{\Phi_a}(f)\|$ for every $f$, and therefore $C_{\Phi_a}$ is an isometry.

Hence its transpose $C_{\Phi_a}^t$ when restricted to $\mathcal{M}(\H^\infty( B_{c_0}))$ is well-defined and its range is again in $\mathcal{M}(\H^\infty( B_{c_0}))$.
Moreover,  $\Lambda_{\Phi_a}\circ \Lambda_{\Phi_a}(\varphi)=\varphi$ for every $\varphi \in   \mathcal{M}(\H^\infty( B_{c_0}))$. Finally,   for each $x^*\in \ell_1$, the function $\widetilde{x^*}\circ  {\Phi_a}\big|_{c_0}$ belongs to $\A_u(B_{c_0})$ (as we have already observed) and so it is a uniform limit of finite type polynomials. Hence, as in the proof of Proposition \ref{Gleason-isometry}, we obtain that $\Lambda_{\Phi_a}(\M_z)= \mathcal{M}_{\Phi_a(z)}$,
for every $z\in B_{\ell_\infty}$.
\end{proof}

Combining this last theorem with Theorem \ref{theo:betaN GP en M0(Hinf BX)} we obtain that for each $z\in B_{\ell_\infty}$, the fiber $\M_z(\H^\infty(B_{c_0}))$ contains $2^c$ \textit{discs} lying in different Gleason parts.

\begin{corollary}
Let $z\in B_{\ell_\infty}$. Then,  there is  an embedding of $\Psi\colon (\beta(\N)\setminus \N)\times \D \to \M_z(\H^\infty(B_{c_0}))$ that is analytic on each slice $\{\theta\}\times\D$ and satisfies:
\begin{enumerate}[\upshape (a)]
\item $\Psi(\theta,\lambda)\not\in \GP(\delta_z)$ for each $(\theta,\lambda)$.

\item $\GP(\Psi(\theta, \lambda))\cap \GP(\Psi(\tilde \theta, \tilde \lambda))=\varnothing$  for each $\theta, \tilde \theta\in \beta(\N)\setminus \N$ with $\theta\ne \tilde \theta$ and any $\lambda, \tilde \lambda\in \D$.
\end{enumerate}
\end{corollary}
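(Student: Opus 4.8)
The plan is to transport the embedding of Theorem~\ref{theo:betaN GP en M0(Hinf BX)} from the fiber $\M_0$ to the fiber $\M_z$ by means of the Gleason isometry $\Lambda_{\Phi_z}$ furnished by the preceding theorem. First I would apply Theorem~\ref{theo:betaN GP en M0(Hinf BX)} with $X=c_0$ to obtain an embedding $\Psi_0\colon(\beta(\N)\setminus\N)\times\D\to\M_0(\H^\infty(B_{c_0}))$ that is analytic on each slice $\{\theta\}\times\D$ and satisfies properties (a) and (b) relative to $\delta_0$. Then, choosing $a=z$ in the preceding theorem, I would define $\Psi:=\Lambda_{\Phi_z}\circ\Psi_0$ and verify that $\Psi$ inherits all the required features, now relative to $\delta_z$.

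The first thing to check is that $\Psi$ lands in the correct fiber and respects the base point. Since $\eta_{z_n}(0)=z_n$ for each $n$, we have $\Phi_z(0)=z$, so the preceding theorem gives $\Lambda_{\Phi_z}(\M_0)=\M_{\Phi_z(0)}=\M_z$; hence $\Psi$ takes values in $\M_z(\H^\infty(B_{c_0}))$. Evaluating on any $f$ shows $\Lambda_{\Phi_z}(\delta_0)(f)=\delta_0(C_{\Phi_z}(f))=\widetilde f(\Phi_z(0))=\widetilde f(z)=\delta_z(f)$, so $\Lambda_{\Phi_z}(\delta_0)=\delta_z$. Because $C_{\Phi_z}$ is an isometric isomorphism of Banach algebras, its transpose is a weak-star homeomorphism of $\M(\H^\infty(B_{c_0}))$ onto itself; composing the topological embedding $\Psi_0$ with this homeomorphism keeps $\Psi$ an embedding. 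Analyticity on each slice is equally immediate: for fixed $\theta$ and $f$, the map $\lambda\mapsto\Psi(\theta,\lambda)(f)=\Psi_0(\theta,\lambda)\bigl(C_{\Phi_z}(f)\bigr)$ is analytic, since $C_{\Phi_z}(f)$ is a fixed element of $\H^\infty(B_{c_0})$ and $\Psi_0$ is analytic on $\{\theta\}\times\D$.

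The core of the argument is that a surjective isometry for the Gleason metric permutes the Gleason parts. By \eqref{Gleason-metric} the relation $\rho(\varphi,\psi)<1$ is equivalent to $\|\varphi-\psi\|<2$, and since $\Lambda_{\Phi_z}$ preserves the Gleason metric it preserves this relation; being a bijection (indeed $\Lambda_{\Phi_z}^{-1}=\Lambda_{\Phi_z}$) it therefore satisfies $\Lambda_{\Phi_z}(\GP(\varphi))=\GP(\Lambda_{\Phi_z}(\varphi))$ for every $\varphi$. Property (a) now follows: were some $\Psi(\theta,\lambda)$ in $\GP(\delta_z)=\GP(\Lambda_{\Phi_z}(\delta_0))=\Lambda_{\Phi_z}(\GP(\delta_0))$, applying $\Lambda_{\Phi_z}$ again would place $\Psi_0(\theta,\lambda)$ in $\GP(\delta_0)$, contradicting property (a) of Theorem~\ref{theo:betaN GP en M0(Hinf BX)}. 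For property (b), since $\Lambda_{\Phi_z}$ carries parts to parts bijectively, $\GP(\Psi(\theta,\lambda))\cap\GP(\Psi(\tilde\theta,\tilde\lambda))=\Lambda_{\Phi_z}\bigl(\GP(\Psi_0(\theta,\lambda))\cap\GP(\Psi_0(\tilde\theta,\tilde\lambda))\bigr)=\varnothing$ whenever $\theta\neq\tilde\theta$, again by property (b) of the base theorem.

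I do not expect a genuine obstacle: all the substantive analytic work is already packaged in Theorem~\ref{theo:betaN GP en M0(Hinf BX)} and in the construction of $\Lambda_{\Phi_z}$. The only points demanding care are bookkeeping ones, namely confirming $\Phi_z(0)=z$ so that the right fiber and base point are hit, and recording that a Gleason isometry permutes the parts so that both the negative statement (a) and the disjointness statement (b) transfer verbatim from $\delta_0$ to $\delta_z$.
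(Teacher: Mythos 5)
Your proposal is correct and is exactly the argument the paper intends: the corollary is stated as an immediate consequence of combining Theorem~\ref{theo:betaN GP en M0(Hinf BX)} (applied with $X=c_0$) with the preceding theorem on $\Lambda_{\Phi_a}$, taking $a=z$ so that $\Phi_z(0)=z$ and the Gleason isometry $\Lambda_{\Phi_z}$ carries $\M_0$ onto $\M_z$ and permutes Gleason parts. Your write-up merely makes explicit the bookkeeping (that $\Lambda_{\Phi_z}(\delta_0)=\delta_z$, that slice-analyticity is preserved, and that a surjective Gleason isometry maps parts to parts) that the paper leaves to the reader.
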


\textbf{Acknowledgements.} This work was initiated while the first and fourth
authors visited the Departamento de Matem\'atica, Universidad de San Andr\'es during September of 2016. Both of them wish to thank the hospitality they received during their visit.

%

\end{document}